\begin{document}
\title{A Customized Augmented Lagrangian Method for Block-Structured Integer Programming}

\author{Rui Wang,
  Chuwen Zhang,
  Shanwen Pu, Jianjun Gao, Zaiwen Wen
  \IEEEcompsocitemizethanks{\IEEEcompsocthanksitem Rui Wang and Zaiwen Wen are with the Beijing International Center for Mathematical Research, Peking University, Beijing 100871, China (email: ruiwang@bicmr.pku.edu.cn; wenzw@pku.edu.cn). 
   \IEEEcompsocthanksitem Chuwen Zhang, Shanwen Pu and Jianjun Gao are with the School of Information Management and Engineering, Shanghai University of Finance and Economics, Shanghai 200433, China (email: chuwzhang@gmail.com; 2019212802@live.sufe.edu.cn;gao.jianjun@shufe.edu.cn).}
\thanks{This research is supported in part by the National Natural Science Foundation of China (NSFC) grants 12331010, 72150001, 72225009, 72394360, 72394365.}
 }

%

%

\IEEEtitleabstractindextext{%
  \begin{abstract}
    Integer programming with block structures has received considerable attention recently and is widely used in many practical applications such as train timetabling and vehicle routing problems. It is known to be NP-hard due to the presence of integer variables.  We define a novel augmented Lagrangian function by directly penalizing the inequality constraints and establish the strong duality between the primal problem and the augmented Lagrangian dual problem. Then,  a customized augmented Lagrangian method is proposed to  address the block-structures. In particular,  the minimization of the augmented Lagrangian function is decomposed into multiple subproblems by decoupling the linking constraints and these subproblems can be efficiently solved using the block coordinate descent method. We also establish the convergence property of the proposed method. To make the algorithm more practical, we further introduce several refinement techniques to identify high-quality feasible solutions. Numerical experiments on a few interesting scenarios show that our proposed algorithm often achieves a satisfactory solution and is quite effective.
  \end{abstract}

  \begin{IEEEkeywords}
Integer programming, augmented Lagrangian method, block coordinate descent, convergence
  \end{IEEEkeywords}}

\newcommand{\defi}{\stackrel{\Delta}{=}}
\newcommand{\qed}{\hphantom{.}\hfill $\Box$\medbreak}
\newcommand{\proof}{\noindent{\bf Proof \ }}
\newcommand\one{\hbox{1\kern-2.4pt l }}
\newcommand{\Item}{\refstepcounter{Ictr}\item[(\theIctr)]}
\newcommand{\QQ}{\hphantom{MMMMMMM}}

\newtheorem{theorem}{Theorem}[section]
\newtheorem{condition}{Condition}[section]
\newtheorem{lemma}{Lemma}[section]
\newtheorem{pro}{Proposition}[section]
\newtheorem{teorem}{Theorem}[section]
\newtheorem{corollary}{Corollary}[section]
\newtheorem{definition}{Definition}[section]
\newtheorem{remark}{Remark}[section]
\newtheorem{assumption}{Assumption}[section]
\newtheorem{example}{Example}[section]
\newenvironment{cproof}

{\renewcommand{\qed}{\hfill $\Diamond$} \end{proof}}
\newcommand{\erhao}{\fontsize{21pt}{\baselineskip}\selectfont}
\newcommand{\xiaoerhao}{\fontsize{18pt}{\baselineskip}\selectfont}
\newcommand{\sanhao}{\fontsize{15.75pt}{\baselineskip}\selectfont}
\newcommand{\sihao}{\fontsize{14pt}{\baselineskip}\selectfont}
\newcommand{\xiaosihao}{\fontsize{12pt}{\baselineskip}\selectfont}
\newcommand{\wuhao}{\fontsize{10.5pt}{\baselineskip}\selectfont}
\newcommand{\xiaowuhao}{\fontsize{9pt}{\baselineskip}\selectfont}
\newcommand{\liuhao}{\fontsize{7.875pt}{\baselineskip}\selectfont}
\newcommand{\qihao}{\fontsize{5.25pt}{\baselineskip}\selectfont}

\makeatletter
\newcommand{\figcaption}{\def\@captype{figure}\caption}
\newcommand{\tabcaption}{\def\@captype{table}\caption}
\makeatother

\newcounter{Ictr}

\renewcommand{\theequation}{
  \arabic{equation}}
\renewcommand{\thefootnote}{\fnsymbol{footnote}}

\def\A{\boldsymbol{A}}
\def\B{\boldsymbol{B}}
\def\C{\mathcal{C}}
\def\V{\mathcal{V}}
\def\I{\mathcal{I}}
\def\Y{\mathcal{Y}}
\def\P{\mathbb{P}}
\def\X{\mathcal{X}}
\def\J{\mathcal{J}}
\def\Q{\mathcal{Q}}
\def\W{\mathcal{W}}
\def\S{\mathcal{S}}
\def\T{\mathcal{T}}
\def\M{\mathcal{M}}
\def\N{\mathbb{N}}
\def\R{\mathbb{R}}
\def\H{\mathbb{H}}
\def\s.t.{\text{s.t.}}
\def\a{\boldsymbol{a}}
\def\x{\textbf{x}}
\def\z{\textbf{z}}
\def\u{\textbf{u}}
\def\v{\textbf{v}}
\def\1{\textbf{1}}
\def\bx{\textbf{\rm \x}}
\def\c{\boldsymbol{c}}
\def\b{\boldsymbol{b}}
\def\d{\boldsymbol{d}}
\def\UB{\text{UB}}
\def\LB{\text{LB}}
\def\OV{\text{OV}}
\def\l{\mu}

\maketitle

\IEEEdisplaynontitleabstractindextext

\IEEEpeerreviewmaketitle

\IEEEraisesectionheading{\section{Introduction}\label{sec:introduction}}

\IEEEPARstart{I}{n} this paper, we consider a block-structured integer programming problem:
\begin{subequations}\label{bLP}
  \begin{align}
    \min        & ~~ \c^{\top}\x\label{bLP1}                           \\
    \text{s.t.} & ~~  \A \x \leq \b, \label{bLP2}                      \\
                & ~~ \x_j \in \mathcal{X}_j, ~j=1,2,...,p,\label{bLP3}
  \end{align}
\end{subequations}
where $\x_j \in \R^{n_j}$ is the $j$-th block variable of $\x \in \R^{n}$, i.e., $\x = (\x_1;...;\x_p)$ for $p \geq 1$ with $n=\sum_{j=1}^p n_j$. In \eqref{bLP}, $\A \in \R^{m\times n}, \b \in  \R^{m}, \c \in  \R^{n}$ and the constraint $\X_j$ is the set of $0,1$ vectors in a polyhedron, i.e.,
$$\X_j:=\{\x_j  \in \{0,1\}^{n_j}: \B_j\x_j \leq \d_j\},~j=1,2,...,p.$$
The constraints \eqref{bLP3} can be reformulated as $\x \in \mathcal{X}:=\{\x \in \{0,1\}^n:~\B\x \leq \d\}$ where the block diagonal matrix $\B \in \R^{q\times n}$ is formed by the small submatrices $\B_j$ as the main diagonal and $\d=(\d_1;...;\d_p)$. Correspondingly, $\c$ and $\A$ can be rewritten as $\c=(\c_1;\c_2;...;\c_p)$ with $\c_j \in \R^{n_j}$ and $\A=(\A_1~ \A_2~...~\A_p)$ with $\A_j \in \R^{m \times n_j}$.

Assume that these constraints \eqref{bLP3} are ``nice" in the sense that an integer program with just these constraints is easy. Therefore, if the coupling constraints \eqref{bLP2} are ignored, the remaining problem which is only composed of the constraints \eqref{bLP3} is easier to solve than the original problem \eqref{bLP}.
For convenience, we assume $\X$ is not empty and \eqref{bLP} is feasible. Denote  by $f^{\text{IP}}$ the optimal value of the problem \eqref{bLP}. 
 This block structure is closely related to an important model known as ``$n$-fold integer programming (IP)" studied extensively in computer vision \cite{wang2016large,lam2020fast}, machine learning \cite{lin2019approximate,anderson2020strong} and theoretical computer science \cite{eisenbrand2019algorithmic,cslovjecsek2024parameterized}, etc. The theoretical foundations of $n$-fold IPs have significant implications for efficient algorithm development in various fields. For example, an algorithmic theory of integer programming based on $n$-fold IP was proposed in \cite{eisenbrand2019algorithmic}. Recent advancements have been provided in \cite{cslovjecsek2024parameterized}. Furthermore, the progress in theory and application of integer programming with a block structure was summarized in \cite{de2008n, Chen2019}. While existing works on $n$-fold IP mainly focus on asymptotic analyses, this work aims to develop an efficient augmented Lagrangian approach tailored to the block structure for practical efficiency with a convergence guarantee.

\subsection{Related Work}

The branch and bound algorithm for general integer programming (IP) was first introduced by Land and Doig \cite{land2010automatic}. Gomory \cite{gomory1958algorithm} developed a cutting plane algorithm for integer programming problems. These two approaches are at the heart of current state-of-the-art software for integer programming.
Unfortunately, these methods often suffer from high computational burdens due to the discrete constraint, thus they are not good choices for solving some large-scale practical problems. Therefore, it is necessary to develop efficient approaches to obtain feasible and desirable solutions, even if they may not be globally optimal. Considering the block structure of the integer linear programming \eqref{bLP}, a natural idea is to decompose a large global problem into smaller local subproblems. There are three typical decomposition methods for solving such problem: Benders \cite{bnnobrs1962partitioning}, Dantzig-Wolfe (DW) \cite{dantzig1960decomposition} and Lagrangian decompositions \cite{geoffrion1974lagrangean}. The Benders decomposition method is used to deal with mixed integer programming problems by decomposing them into a master problem and a primal subproblem. It generates cuts that are added to the master problem by solving the dual of the primal subproblem. This method is not suitable for \eqref{bLP}, since our primal subproblem is an integer programming problem, which is still NP-hard. The DW decomposition method can solve block structured integer programming problem \eqref{bLP} based on resolution theorem. To improve the tractability of large-scale problems, the DW decomposition relies on column generation. However, it might be harder or even intractable to solve the master problem by column generation if further constraints are applied to $\X_j$ \cite[Chapter 8.2]{conforti2014integer}.  The Lagrangian decomposition introduces Lagrange multipliers and constructs a sequence of simpler subproblems. 
 Although the method itself has a limitation due to its inherent solution symmetry for certain practical problems,
 the Lagrangian duality bound is  useful in  the branch-and-bound procedure. A Lagrangian heuristic algorithm has also been proposed in \cite{CAPRARA2006738} for solving a real-world train timetabling problem, which decomposed the problem into smaller subproblems using Lagrangian relaxation and developed a heuristic algorithm based on subgradient optimization to generate feasible solutions.

 Many techniques from continuous optimization including the alternating direction method of multipliers (ADMM)  have been applied to  solve integer programming recently.
The authors in \cite{wu2018ell} proposed an $\ell_p$-box ADMM for solving a binary integer programming, where the binary constraint is replaced by the intersection of a box and an $\ell_p$-norm sphere. The authors in \cite{sun2021decomposition} proposed augmented Lagrangian method (ALM) and ADMM based on the $\ell_1$ augmented Lagrangian function for two-block mixed integer linear programming (MILP). For the multi-block MILP problem,  an extended ADMM was proposed in \cite{feizollahi2015large} to employ a release-and-fix approach to solve the subproblems. There are also some heuristic methods based on ADMM \cite{takapoui2017alternating,yao2019admm,ZHANG2020102823,kanno2018alternating} that have been successfully applied to various practical integer programming problems.


There are some other widespread approaches on the relaxation of the binary constraint including linear programming (LP) relaxation \cite{johnson2000progress,feldman2005using} and semidefinite relaxation \cite{laurent2005semidefinite,wang2013fast}.
For the multi-block MILP problem, several inexact methods have been proposed including a distributed algorithm relying on primal decomposition \cite{camisa2018primal}, a dual decomposition method \cite{vujanic2016decomposition}, and a decomposition-based outer approximation method \cite{muts2020decomposition}.
By introducing continuous variables to replace the discrete variables,  the exact penalty methods \cite{murray2010algorithm,lucidi2010exact,yu2013exact} have been studied for solving the nonlinear IP problems.  Then the problem is transformed into an equivalent nonlinear continuous optimization problem.

\subsection{Contributions}
In this paper, we propose a customized ALM for solving \eqref{bLP}. Our main contributions are listed below.

\begin{itemize}
  \item[(i)] We define a novel augmented Lagrangian (AL) function that differs from the classical AL function and establish strong duality theory for the augmented Lagrangian relaxation of the block-structured integer programming \eqref{bLP}, which motivates us to utilize the ALM for solving the problem \eqref{bLP}.
  \item[(ii)] Based on the special structure of \eqref{bLP}, we propose two block coordinate descent (BCD)-type methods that are well-suited for solving the resulting subproblems in our ALM framework. These methods utilize classical update and proximal linear update techniques, denoted as ALM-C and ALM-P, respectively. We also analyze their convergence properties under proper conditions.
  \item[(iii)] To address challenges in finding the global optimal solution for practical problems such as train timetabling, we introduce refinement strategies and propose a customized ALM to enhance the quality of solutions generated by the ALM. Our numerical experiments demonstrate the effectiveness of the proposed method in solving large-scale instances. 
\end{itemize}

 Note that the ADMM-based method in \cite{ZHANG2020102823} solves each subproblem only once per iteration for a fixed Lagrangian multiplier. On the other hand, our ALM method  involves multiple iterations using the BCD method to minimize each AL function until certain rules are satisfied. Once the AL subproblem is solved exactly, the strong duality guarantees that the ALM can converge to a global minimizer of the problem \eqref{bLP}. Therefore, achieving high accuracy in minimizing the AL function allows our ALM to achieve superior solutions with fewer iterations, resulting in significantly reduced computational time compared to the ADMM. This claim is supported by our numerical tests. Additionally, we introduce Assumptions 3.1 and 3.2, derived from the structural characteristics of the practical problem in \cite{yao2019admm} and \cite{ZHANG2020102823}, and subsequently analyze the theoretical properties of the ALM-C method under these assumptions. However, our ALM-P method has wider applicability and does not require these specific assumptions. Moreover, the ALM can be regarded as a dual ascent algorithm with respect to dual variables, hence ensuring its convergence. In contrast,  the convergence analysis of the ADMM for solving this kind of problem remains unclear.
 
The existing ALM-based methods for solving integer programming in \cite{feizollahi2017exact} mainly focused on the duality of the augmented Lagrangian dual for MILP with equality constraints, but  numerical experiments were not available. Moreover, our approach differs significantly in that we handle inequality constraints directly, without introducing slack variables. This approach has two key benefits: it reduces the number of variables, thus decreasing computational burden in high-dimensional cases, and it allows for more customized algorithmic design based on the inherent structure of the problem.
 In \cite{sun2021decomposition}, the $\ell_1$ norm was considered as an augmented term in the AL function for MILP such that the minimization of the AL function can be decomposed into multiple low-dimensional $\ell_1$-penalty subproblems due to the separable blocks. These subproblems were then solved in parallel using the Gurobi solver. In our work, we take a different approach by using a quadratic term in the AL function which allows the augmented Lagrangian subproblem to be reduced to a linear programming under certain conditions. Furthermore, we update the blocks of variables sequentially one at a time.

\subsection{Notation and Organization}
Let $\N_m:=\{1,2,...,m\}$, $\N_p:=\{1,2,...,p\}$ and $\R^m_+:= \{x \in \R^m: x_i \geq 0~\text{for all}~ i\}$.  The superscript $``\top"$ means ``transpose". Denote $\a_i$ by the $i$-th row of the matrix $\A$ and $b_i$ by the $i$-th element of the vector $\b$.
For convenience, we let $\A_{\I,j}$ denote the submatrix consisting of columns of $\A_j$ indexed by $\I$ and $\b_{\I}$ denote the subvector consisting of entries of $\b \in \R^m$ indexed by $\I$.
A neighborhood of a point $\x^*$ is a set $\mathcal{N}(\x^*,1)$ consisting of all points $\x$ such that $\|\x-\x^*\|^2 \leq 1$. Let $\mathbf{1}$ be a row vector of all ones.

This paper is structured as follows.  An AL function is defined and the strong duality of the problem \eqref{bLP} is discussed in section \ref{sec_Duality}. We propose a customized ALM incorporating BCD methods and refinement strategies to improve the quality of the ALM solutions in section \ref{sec_ALM}.  We establish the convergence results of both the BCD methods to minimize the AL function and the ALM applied to the whole problem \eqref{bLP} in section \ref{sec_con}.  The proposed method is applied to two practical problems in section \ref{sec_app}. Concluding remarks are made in the last section.
\section{The AL Strong Duality}\label{sec_Duality}
The Lagrangian relaxation (LR) of (\ref{bLP}) with respect to the constraint \eqref{bLP2} has the following form:
\begin{equation}\label{LR}
  \min_{\x \in \X}~~ \mathcal{L}(\bx,\lambda)  :=\sum\limits_{j=1}^p~ \c_j^{\top}\x_j+\lambda^{\top}\left(\sum\limits_{j=1}^p\A_j\x_j - \b\right),
\end{equation}
where  $\lambda \in \R^m_+$ is a Lagrange multiplier associated with the constraint \eqref{bLP2}. We can observe that \eqref{LR} is much easier to be solved than the original problem \eqref{bLP} since  \eqref{LR} can be decomposed into $p$-block low-dimensional independent subproblems. However, there may exist a non-zero duality gap when certain constraints are relaxed by using classical Lagrangian dual \cite{feizollahi2017exact}. To reduce the duality gap, we add a quadratic penalty function to the  Lagrangian function in \eqref{LR} and solve the augmented Lagrangian dual problem which is defined as follows.
\begin{definition}[AL Dual]
  We define an AL function by
  \begin{eqnarray}\label{alf}
    L(\bx,\lambda,\rho)  &=&\sum\limits_{j=1}^p~ \c_j^{\top}\bx_j+\lambda^{\top}\left(\sum\limits_{j=1}^p\A_j\bx_j - \b\right)\nonumber\\
    && +\frac{\rho}{2}\left\|\left(\sum\limits_{j=1}^p\A_j\bx_j - \b\right)_+\right\|^2,
  \end{eqnarray}
  where $\lambda \in \R_+^m, \rho>0.$
  The  corresponding AL relaxation of \eqref{bLP} is given by
  \begin{equation}\label{df}
    d(\lambda, \rho) := \min_{\bx \in \X} ~L(\bx,\lambda,\rho).
  \end{equation}
  We call the following maximization problem the AL dual problem:
  \begin{equation}\label{zdp}
    f_\rho^{\text{LD}}:=\max_{\lambda\in \R_{+}^m} d(\lambda, \rho).
  \end{equation}
\end{definition}
Note that the classical AL function of (\ref{bLP}) is given by
\begin{equation}\label{cal}
  \hat{L}(\bx,\lambda,\rho)= \c^{\top}\bx+\frac{\rho}{2}\left\|\left(\A\bx - \b+\frac{\lambda}{\rho}\right)_+\right\|^2-\frac{\|\lambda\|^2}{2\rho}.
\end{equation}
We prefer using the form of the AL function \eqref{alf} rather than the classical version \eqref{cal} due to the absence of $\lambda/\rho$ in the quadratic term of the max function. This makes it possible to convert the AL function into a linear function under certain conditions, making the problem \eqref{df} easier to solve, which will be explained in the next section. We also verify that the quadratic term in \eqref{alf} is an exact penalty and strong duality holds between the AL dual problem \eqref{zdp} and the primal problem \eqref{bLP}.

\begin{lemma}[Strong Duality]\label{strongd}
  Suppose the problem \eqref{bLP} is feasible and its optimal value is bounded.   If a minimum achievable non-zero slack exists, i.e., there is a $\delta$ such that for any $ i \in\N_m$,
  \begin{equation}\label{delta}
    0<\delta \leq \min_{\bx \in \X}\{(\a_i\bx-b_i)^2:  \a_i\bx> b_i\},
  \end{equation}
  then there exists a finite $\rho^* \in  (0, +\infty)$ such that
  $$f_{\rho^*}^{\text{LD}} = \min_{\A{\textbf{\rm \x}} \leq \b, {\textbf{\rm \x}} \in \mathcal{X}} \c^{\top}{\textbf{\rm \x}}. $$
\end{lemma}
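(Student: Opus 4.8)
The plan is to establish two inequalities. The easy direction is weak duality: for every $\lambda \in \R_+^m$ and $\rho > 0$, one has $d(\lambda,\rho) \leq f^{\text{IP}}$, because for any feasible $\x$ of \eqref{bLP} we have $\A\x \leq \b$, so the linear penalty term $\lambda^\top(\sum_j \A_j\x_j - \b) \leq 0$ and the quadratic term $\frac{\rho}{2}\|(\sum_j \A_j\x_j-\b)_+\|^2 = 0$ vanish; hence $L(\x,\lambda,\rho) \leq \c^\top\x$, and minimizing over $\x \in \X$ (a superset of the feasible region) only decreases this further. Taking the max over $\lambda$ gives $f_\rho^{\text{LD}} \leq f^{\text{IP}}$ for all $\rho > 0$.

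The substantive direction is to show that for $\rho$ large enough (and even with $\lambda = 0$), $d(0,\rho) \geq f^{\text{IP}}$, which combined with weak duality forces equality. First I would split $\X$ into $\X_{\text{feas}} := \{\x \in \X : \A\x \leq \b\}$ and its complement $\X_{\text{infeas}} := \X \setminus \X_{\text{feas}}$. On $\X_{\text{feas}}$ the penalty term is zero, so $L(\x,0,\rho) = \c^\top\x \geq f^{\text{IP}}$. On $\X_{\text{infeas}}$, there is at least one index $i$ with $\a_i\x > b_i$, so by the minimum-slack hypothesis \eqref{delta} the quadratic term satisfies $\frac{\rho}{2}\|(\A\x-\b)_+\|^2 \geq \frac{\rho}{2}(\a_i\x - b_i)^2 \geq \frac{\rho\delta}{2}$. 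Meanwhile $\c^\top\x$ is bounded below over the finite (or at least bounded-objective) set $\X$; call a lower bound $-M$ where $M := \max\{0, -\min_{\x \in \X}\c^\top\x\}$, which is finite since $\X \subseteq \{0,1\}^n$ is a finite set. Hence on $\X_{\text{infeas}}$, $L(\x,0,\rho) \geq -M + \frac{\rho\delta}{2}$. Choosing any $\rho^* \geq \frac{2(M + f^{\text{IP}})}{\delta}$ (taking $f^{\text{IP}}$ as an upper bound for itself, or using any known upper bound) makes $-M + \frac{\rho^*\delta}{2} \geq f^{\text{IP}}$, so $L(\x,0,\rho^*) \geq f^{\text{IP}}$ on all of $\X$. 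Therefore $d(0,\rho^*) \geq f^{\text{IP}}$, and since $f_{\rho^*}^{\text{LD}} \geq d(0,\rho^*)$, we conclude $f_{\rho^*}^{\text{LD}} \geq f^{\text{IP}}$.

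Combining the two directions, $f_{\rho^*}^{\text{LD}} = f^{\text{IP}} = \min_{\A\x \leq \b,\, \x \in \X}\c^\top\x$, and the $\rho^*$ produced is finite, as required.

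The main obstacle — really the only delicate point — is controlling $\c^\top\x$ from below on the infeasible part of $\X$; here the finiteness of $\X$ (equivalently, that $\X \subseteq \{0,1\}^n$) does the work, so no genuine compactness argument in a continuous sense is needed. One should also be slightly careful that the bound on $\rho^*$ only needs a finite \emph{upper} bound on $f^{\text{IP}}$ (guaranteed since \eqref{bLP} is feasible) and a finite \emph{lower} bound on $\c^\top\x$ over $\X$ (guaranteed by finiteness of $\X$), so the hypothesis that the optimal value is bounded is used exactly to pin down a usable numerical threshold. A secondary point worth stating explicitly is that $d(\lambda,\rho)$ is monotone nondecreasing in $\rho$, so once the conclusion holds at $\rho^*$ it persists for all $\rho \geq \rho^*$; this is immediate from the penalty term being nonnegative and nondecreasing in $\rho$ pointwise.
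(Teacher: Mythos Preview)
Your argument is correct. The structural outline---weak duality first, then show $d(0,\rho^*)\geq f^{\text{IP}}$ by splitting $\X$ into feasible and infeasible points---is sound, and the minimum-slack hypothesis \eqref{delta} gives exactly the uniform lower bound $\|(\A\x-\b)_+\|^2\geq\delta$ on $\X_{\text{infeas}}$ that you need. The finiteness of $\X\subseteq\{0,1\}^n$ guarantees your $M$ is finite, so the threshold $\rho^*$ is finite as claimed.

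The paper takes a different route on the infeasible side: instead of the crude bound $\c^\top\x\geq -M$, it uses the LP-relaxation dual variable $\lambda^{\text{LP}}$ and the Lagrangian inequality $\c^\top\x+(\lambda^{\text{LP}})^\top(\A\x-\b)\geq f^{\text{LP}}$, yielding the explicit constant $\rho^*=2(\c^\top\x^0-f^{\text{LP}})/\delta$ for any feasible $\x^0$. That buys a potentially much sharper penalty threshold, since $f^{\text{LP}}$ is typically a far better lower bound on the linear part than $\min_{\x\in\X}\c^\top\x$. Your approach, on the other hand, is more elementary---it never invokes LP duality---and it sidesteps a subtlety in the paper's argument: with $\lambda=0$, a feasible $\x$ automatically gives $L(\x,0,\rho)=\c^\top\x\geq f^{\text{IP}}$, whereas with $\lambda=\lambda^{\text{LP}}$ the term $(\lambda^{\text{LP}})^\top(\A\x-\b)\leq 0$ pulls in the wrong direction and the paper's handling of that case is not entirely transparent. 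Your closing remark on monotonicity of $d(\lambda,\rho)$ in $\rho$ is also a useful addition not spelled out in the paper.
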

\begin{proof}
 For any $\lambda \in  \R_+^m $ and $\rho > 0$, since $\{\x \in \X : \A\x - \b \leq 0\} \subseteq \X$, we have
\begin{equation}\label{fip}
  d(\lambda, \rho) \leq  \mathop{\min}_{\x \in \X \atop \A\x - \b\leq 0} ~L(\x,\lambda,\rho)
  \leq \mathop{\min}_{\x \in \X \atop \A\x - \b \leq 0} \c^{\top}\x =f^{\text{IP}}.
\end{equation}
Then $f_\rho^{\text{LD}} \leq f^{\text{IP}}$.

Now it suffices to find a finite $\rho^*$ such that $f_{\rho^*}^{\text{LD}} \geq f^{\text{IP}}$. We first let $\x^0$ be any arbitrary feasible solution of \eqref{bLP}, that is, $\x^0 \in \X$ and
$\A\x^0 \leq \b$. Denote by $f^{\text{LP}}$ the linear programming (LP) relaxation of $f^{\text{IP}}$.
Since the value of the LP relaxation of (\ref{bLP}) is bounded \cite{blair1977value}, i.e., $-\infty < f^{\text{LP}}\leq \c^{\top}\x^0 < +\infty$, we set $\rho^*=2( \c^{\top}\x^0-f^{\text{LP}})/\delta$, then  $0 < \rho^* < +\infty$. Moreover,
\begin{equation}\label{fld}
  \!f^{\text{LD}} \geq \max_{\lambda\in \R_+^m} d(\lambda, \rho^*)\geq d(\lambda^*, \rho^*) =\min_{\x \in\X} L(\x, \lambda^*, \rho^*),
\end{equation}
where $\lambda^* \in \R_+^m$ is a given parameter.
Let $\mathbb{I}:=\{i \in \N_m: \a_i\x-b_i > 0, \x \in \X\}$, we consider following two cases:

Case 1: $\mathbb{I}= \emptyset$. In this case we have $\A\x \leq \b$ for all $\x \in \X$.  By letting $\bar{\lambda}=0$, we can obtain that
\begin{align}\label{fip1}
  L(\x, \bar{\lambda}, \rho^*) & =\c^{\top}\x+\bar{\lambda}^{\top}(\A\x - \b)+\frac{\rho^*}{2}\|\left(\A\x - \b\right)_+\|^2 \nonumber \\
                                     & = \c^{\top}\x                                   
                                    \geq  f^{\text{IP}}.
\end{align}

Case 2: $\mathbb{I}\neq \emptyset$. Denote by $\lambda^{\text{LP}}$  a positive optimal vector of dual variables for $\A\x \leq \b$ in the LP relaxation of (\ref{bLP}). In this case we get
\begin{align*}
  L(\x, \lambda^{\text{LP}}, \rho^*)
    =&\c^{\top}\x+(\lambda^{\text{LP}})^{\top}(\A\x - \b)+\frac{\rho^*}{2}\sum_{i \in \mathbb{I}}(\a_{i}\x - b_{i})^2\\
    &+\frac{\rho^*}{2}\sum\nolimits_{i \notin \mathbb{I}}((\a_{i}\x - b_{i})_+)^2\nonumber \\
    =&\c^{\top}\x+(\lambda^{\text{LP}})^{\top}(\A\x - \b)+\frac{\rho^*}{2}\sum_{i \in \mathbb{I}}(\a_{i}\x - b_{i})^2.
\end{align*}
Since
$$\frac{\rho^*}{2}\sum_{i \in \mathbb{I}}(\a_{i}\x - b_{i})^2 \geq \frac{\rho^*}{2}\min_{i \in \mathbb{I}}(\a_{i}\x - b_{i})^2 \overset{\eqref{delta}}{\geq} \frac{\rho^*}{2} \delta = \c^{\top}\x^0-f^{\text{LP}},$$
it yields
\begin{align}\label{fip2}
  L(\x, \lambda^{\text{LP}}, \rho^*)
   & \geq \c^{\top}\x+(\lambda^{\text{LP}})^{\top}(\A\x - \b)+( \c^{\top}\x^0-f^{\text{LP}}) \nonumber \\
   & \geq f^{\text{LP}}+( \c^{\top}\x^0-f^{\text{LP}})= \c^{\top}\x^0 \geq f^{\text{IP}},
\end{align}
where the second inequality holds due to the definition of $\lambda^{\text{LP}}$.
Thus the inequalities \eqref{fip1} and \eqref{fip2} by letting $\lambda^*$ be $\lambda^{\text{LR}}$ and $\bar{\lambda}$ imply that
$$d(\lambda^*, \rho^*) = \min\nolimits_{\x \in \X} ~L(\x,\lambda^*, \rho^*) \geq f^{\text{IP}}. $$
This together with \eqref{fld} and \eqref{fip} yields that
$$f_{\rho^*}^{\text{LD}}  = d(\lambda^*, \rho^*) = f^{\text{IP}}.$$
Hence we complete the proof.
\qed
\end{proof}

 One can observe from the proof that there exists a finite value $\rho^*$ such that for all $\rho \geq \rho^*$, the strong duality still holds. Therefore,  given a sufficiently large penalty parameter $\rho$, we can achieve a satisfactory feasibility of \eqref{bLP}.  Specifically, as $\rho$ increases beyond $\rho^*$, the augmented Lagrangian method penalizes constraint violations \eqref{bLP2} more heavily, thereby making the solution closer to the feasible region of the problem. The strong duality allows us to obtain a globally optimal solution to the problem \eqref{bLP}  by solving the augmented Lagrangian dual problem \eqref{zdp}.

To utilize the concave structure of the dual function $d(\lambda,\rho)$, we apply the projected subgradient method for solving \eqref{zdp} since the dual function  is not differentiable. We first give the definition of subgradient and subdifferential.
\begin{definition}\label{def_subg}
  Let $h: \R^m \rightarrow \R$ be a convex function. The vector $s \in \R^m$ is called a subgradient of $h$ at $\bar{x} \in \R^m$ if
  $$h(x)-h(\bar{x}) \geq s^{\top}(x-\bar{x}),~~\forall x\in\R^m.$$
  The subdifferential of $h$ at $\bar{x}$ is the set of all subgradients of $h$ at $\bar{x}$ which is given by
  $$\partial h(x)=\{s\in \R^m: h(x)-h(\bar{x}) \geq s^{\top}(x-\bar{x}),  ~~\forall x\in\R^m\}.$$
\end{definition}
Since $d(\lambda,\rho)$ is concave, we adjust Definition \ref{def_subg} to correspond to the set $-\partial (-d(\lambda,\rho))$, allowing us to apply properties of the subdifferential of a convex function to a concave function.
\begin{pro}\label{subgra}
  Consider the dual function $d(\lambda, \rho): \R^{m+1} \rightarrow \R$. Then the subdifferentials of $d$ at $\lambda$ and $\rho$ satisfy
  \begin{equation}\label{subg}
    \A\bx -\b \in \partial_{\lambda} d(\lambda,\rho)
    ,\quad
    \frac{1}{2}\|(\A\bx -\b)_+\|^2 \in \partial_{\rho} d(\lambda,\rho),
  \end{equation}
  where $\bx$ is a solution of \eqref{df} with input $(\lambda, \rho)$.
\end{pro}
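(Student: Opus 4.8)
The plan is to exploit the fact that, for each fixed feasible point $\bx\in\X$, the map $(\lambda,\rho)\mapsto L(\bx,\lambda,\rho)$ is affine: the multiplier enters only through the linear term $\lambda^{\top}(\A\bx-\b)$, and $\rho$ enters only through $\tfrac{\rho}{2}\|(\A\bx-\b)_+\|^2$. Hence $d(\lambda,\rho)=\min_{\bx\in\X}L(\bx,\lambda,\rho)$ is a pointwise minimum of affine functions of $(\lambda,\rho)$, and therefore concave; and since $\X$ is a nonempty finite set, the minimum is attained and $d$ is finite-valued everywhere, so a supergradient exists at every point.

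First I would fix $(\lambda,\rho)\in\R_+^m\times(0,+\infty)$ and let $\bx$ be a solution of \eqref{df} at $(\lambda,\rho)$, so that $d(\lambda,\rho)=L(\bx,\lambda,\rho)$. For an arbitrary pair $(\lambda',\rho')$, the definition of $d$ in \eqref{df} gives $d(\lambda',\rho')\le L(\bx,\lambda',\rho')$. Writing out both sides of $d(\lambda',\rho')-d(\lambda,\rho)\le L(\bx,\lambda',\rho')-L(\bx,\lambda,\rho)$ using \eqref{alf}, the terms $\c^{\top}\bx$ cancel and one is left with
\[
d(\lambda',\rho')-d(\lambda,\rho)\le (\lambda'-\lambda)^{\top}(\A\bx-\b)+\tfrac{\rho'-\rho}{2}\bigl\|(\A\bx-\b)_+\bigr\|^2 .
\]
This is exactly the statement that $\bigl(\A\bx-\b,\ \tfrac12\|(\A\bx-\b)_+\|^2\bigr)$ is a supergradient of the concave function $d$ at $(\lambda,\rho)$, i.e.\ it lies in $-\partial(-d)(\lambda,\rho)$, which is the object the paper calls the subdifferential of $d$ after the adjustment of Definition \ref{def_subg}.

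Then I would specialize the displayed inequality: taking $\rho'=\rho$ yields $d(\lambda',\rho)-d(\lambda,\rho)\le(\lambda'-\lambda)^{\top}(\A\bx-\b)$ for all $\lambda'$, which is precisely $\A\bx-\b\in\partial_{\lambda}d(\lambda,\rho)$ in the concave sense; taking $\lambda'=\lambda$ yields $d(\lambda,\rho')-d(\lambda,\rho)\le\tfrac{\rho'-\rho}{2}\|(\A\bx-\b)_+\|^2$ for all $\rho'$, i.e.\ $\tfrac12\|(\A\bx-\b)_+\|^2\in\partial_{\rho}d(\lambda,\rho)$. This gives both inclusions in \eqref{subg} and completes the argument.

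There is essentially no hard step here; the only points requiring a little care are (a) that the minimum defining $d$ is attained and finite, which follows from nonemptiness and finiteness of $\X$, and (b) keeping the convexity/sign conventions straight, since $d$ is concave and Definition \ref{def_subg} is applied through $-\partial(-d)$. Note also that both components of the supergradient are obtained jointly from the single inequality above, so no separate compatibility check between the $\lambda$- and $\rho$-parts is needed.
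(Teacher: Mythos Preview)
Your proposal is correct and follows essentially the same argument as the paper: both use that $\bx$ attains the minimum defining $d(\lambda,\rho)$, bound $d(\hat\lambda,\hat\rho)$ above by $L(\bx,\hat\lambda,\hat\rho)$, and expand the affine dependence on $(\lambda,\rho)$ to obtain the supergradient inequality. Your additional remarks on concavity, attainment via finiteness of $\X$, and the explicit specialization to $\rho'=\rho$ and $\lambda'=\lambda$ are helpful elaborations but do not change the route.
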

\begin{proof}
Let $\x$ be a solution of \eqref{df} with input $(\lambda, \rho)$. Then for any pair $(\hat{\lambda}, \hat{\rho}) \in \R^m_+ \times \R_+$, it holds that
\begin{eqnarray*}
  \begin{array}{ll}
    d(\hat{\lambda}, \hat{\rho})\!\!\!\! & \leq \c^{\top}\x +\hat{\lambda}^{\top}(\A\x  -\b) + \frac{\hat{\rho}}{2}\|(\A\x  -\b)_+\|^2                                                                                                      \\
                                         & = \c^{\top}\x +\lambda^{\top}(\A\x -\b) + \frac{\rho}{2}\|(\A\x -\b)_+\|^2                                 \\&\quad+(\hat{\lambda}-\lambda)^{\top}(\A\x -\b)+\frac{\hat{\rho}-\rho}{2}\|(\A\x -\b)_+\|^2 \\
                                         & = d(\lambda, \rho)+(\hat{\lambda}-\lambda)^{\top}(\A\x -\b) + \frac{\hat{\rho}-\rho}{2}\|(\A\x -\b)_+\|^2,
  \end{array}
\end{eqnarray*}
where the last equality holds due to the optimality of $\x$. Then by the definition of subgradient and subdifferential, we arrive at \eqref{subg}.
\qed
\end{proof}

\section{Augmented Lagrangian method}\label{sec_ALM}

In this section, we introduce an augmented Lagrangian method framework. This method is composed of two steps in solving the augmented Lagrangian dual problem \eqref{zdp}.  We first use the primal information $\x$ to construct the subgradient of $d$ at $(\lambda, \rho)$. Since $\lambda$ and $\rho$ satisfy  $\lambda \geq 0$ and $\rho>0$, then we apply the projected subgradient method to update parameters $\lambda$ and  $\rho$.  Starting from $\lambda^0 \in \R_+^{m}$ and $\rho^0>0$, we can update $\lambda$ and $\rho$ at $(k+1)$-th iteration by
\begin{subequations}
  \begin{align}
    \lambda^{ k + 1} & = \left(\lambda^k+\alpha^k (\A\x^{k+1}-\b)\right)_+,\label{lam}                                                                                          \\
    \rho^{k+1}       & = \left(\rho^k+\frac{\alpha^k}{2}\left\|(\A\x^{k+1} -\b)_+\right\|^2\right)_+ \nonumber\\
    &= \rho^k+\frac{\alpha^k}{2}\left\|(\A\x^{k+1} -\b)_+\right\|^2,\label{rho}
  \end{align}
\end{subequations}
where $\x^{k+1}$ is an optimal solution of the augmented Lagrangian relaxation problem \eqref{df} at $\lambda^k$ and $\rho^k$, that is,
\begin{equation}\label{xsub}
  \x^{k+1} \in \arg\min\limits_{\x \in  \mathcal{X}}~ L (\x,  \lambda^k,\rho^k).
\end{equation}
Therefore, the iterative processes \eqref{xsub}, \eqref{lam} and \eqref{rho} consisting of primal and dual variables make up the ALM framework.

Solving the $\x$-subproblem \eqref{xsub} is an important step in ALM. While the subproblem \eqref{xsub} has no closed form solution in general, we can apply an iterative method to solve it exactly or inexactly. Consequently, the ALM for solving the problem \eqref{bLP} consists of outer and inner iterations. The subscript $k$ is used to denote
the outer iteration number, and the subscript $t$ is used to denote the inner iteration number.

\subsection{A BCD method for subproblem \eqref{xsub}}

In this subsection, we present a BCD method for solving the $\x$-subproblem \eqref{xsub} in the ALM framework. The BCD method minimizes the function $L$ by iterating cyclically in order $\x_1,...,\x_p$, fixing the previous iteration during each iteration. Denote $\x^t = \left(\x_1^t; \x_2^t;...;\x_p^t\right)$ where $\x^t_j$ is the value of $\x_j$ at its $t$-th update. Let
$$ L^t_j(\x_j,\lambda,\rho)= L(\x^{t+1}_1,...,\x^{t+1}_{j-1},\x_j,\x^t_{j+1},...,\x^t_{p},\lambda,\rho),$$
and
$$\x^t(j) = \left(\x_1^{t+1},\x_2^{t+1},...,\x_{j-1}^{t+1},\x_{j}^{t},\x_{j+1}^{t},...,\x_{p}^{t}\right).$$
Therefore $\x^t(1) = \left(\x_1^t; \x_2^t;...;\x_p^t\right)=\x^t$ and $\x^t(p+1) = \left(\x_1^{t+1}; \x_2^{t+1};...;\x_p^{t+1}\right)=\x^{t+1}.$ Given fixed parameters $\lambda\geq 0$ and $\rho>0$,
we can also calculate the gradient of $L(\x^t(j),\lambda,\rho)$ at $\x_j$ by
  \begin{eqnarray*}
  g_j(\x^t)&:=&\nabla_{\x_j} L(\x^t(j),\lambda,\rho)\\
  &=&\c_j+\A_j^{\top}\lambda+\rho\A_j^{\top}\left(\A\x^t(j) - \b\right)_+.
\end{eqnarray*}
At each step,  we consider two types of updates for every $\x_j \in \X_j$:
\begin{subequations}
\begin{align}
  & \text{Classical:}~  \x^{t+1}_j \in \mathop{\arg\min}_{\x_j \in \X_j}~ L^t_j(\x_j,\lambda,\rho),  \label{xjko}\\
  &\text{Proximal linear:}\nonumber \\
  & \x_j^{ t + 1}  \in \mathop{\arg\min}_{\x_j \in  \mathcal{X}_j} \left\{\langle \x_j-\x_j^t, g_j(\x^t)\rangle + \frac{1}{2\tau}\|\x_j-\x_j^t\|^2\right\},\label{xjkl}
  \end{align}
\end{subequations}
where $\tau>0$ is a step size.
In fact, if one defines the projection operator by
$$P_\X(v) \in \arg\min\{\|u-v\|: u \in \X\},$$
then we obtain the following equivalent form of \eqref{xjkl}:
$$ \x_j^{ t + 1}=P_{\X_j}\left(\x_j^t-\tau g_j(\x^t)\right).$$

In general, the classical subproblem \eqref{xjko} is fundamentally harder to solve because of the quadratic term in the objective function. However,  we derive a simplified form of this subproblem under certain conditions, which will be discussed in Subsection \ref{BCD_cu}. By contrast, the prox-linear subproblem \eqref{xjkl} is relatively easy to solve because the objective function is linear with respect to $\x_j$. Now we summarize the ALM for solving  \eqref{bLP} in Algorithm \ref{ALM}, which allows each $\x_j$ to be updated by \eqref{xjko} or \eqref{xjkl}. 
We assume that each block $j$ is updated by the same scheme in \eqref{xjko} and \eqref{xjkl} for all iteration $t$.

\begin{algorithm}
  \caption{ ALM with BCD}
  \label{ALM}
  \KwIn{Initial point $\x^0$, $\lambda^0,\rho^0$.}
  \KwOut{A feasible solution $\x^{k+1}$.}
  \For{ $k=0,1,...,k_{\max}$}
  {
  \For{$t=0,1,...,t_{\max}$}
  {\For{$j=1,2,...,p$}
  {Compute $\x_j^{(t+1)}$ by \eqref{xjko} or \eqref{xjkl}\;}
  \textbf{if} $\bx^{(t+1)}=\bx^{(t)}$, \textbf{then} let $\x^{k+1}=\x^{(t+1)}$ and \textbf{break}\;}
  \textbf{if} $\|(\A\bx^{k+1}-\b)_+\|_2 =0$, \textbf{then Teminate}\;
  Update the Lagrangian multipliers  $\lambda^{k+1}$  by \eqref{lam} and the penalty coefficient $\rho^{k+1}$ by \eqref{rho}.
  }
\end{algorithm}

Subsequently, we show more detailed information about these two updates \eqref{xjko} and \eqref{xjkl}. 

\subsubsection{Proximal linear update of BCD}
Before considering the proximal linear subproblem \eqref{xjkl}, we first give a definition of a linear operator, which is essential in the BCD method.
\begin{definition}\label{lo}
  The linear operator associated to a vector $v \in \R^n$ is defined by
  $$\mathcal{T}_{\Omega}(v)=\mathop{\arg\min}_{\u \in  \Omega}~ v^{\top}\u,$$
  where $\Omega$ is a nonempty and closed set.
\end{definition}

Benefiting from the good characteristics of $\x$ being a binary variable, we have
\begin{equation}\label{x1x}
  \|\x_j\|^2=\textbf{1}^{\top}\x_j,~~\text{for}~~\forall j \in \N_p,
\end{equation}
then the objective function in \eqref{xjkl} is linear. Therefore, we can also rewrite \eqref{xjkl} as
\begin{align}\label{clf}
  \x_j^{ t + 1}
  &\in \mathop{\arg\min}_{\x_j \in  \mathcal{X}_j}~ \left\{ \x_j^{\top} g_j(\x^t) + \frac{1}{2\tau}\1^{\top}\x_j-\frac{1}{\tau}\x_j^{\top}\x_j^t\right\}  \nonumber\\
  &=\mathcal{T}_{\X_j}\left(\tau g_j(\x^t)+\frac{\textbf{1}}{2}-\x_j^t\right).
\end{align}
Due to the discrete property of the feasible set $\X_j$, it is crucial to choose the step size $\tau$ appropriately. If $\tau$ is too large, the BCD method will not converge. If $\tau$ is too small, the BCD method will be stuck at some points. The reason why this happens will be explained in the convergence analysis.

\subsubsection{Classical update of BCD}\label{BCD_cu}
We start by giving the following assumption of model \eqref{bLP}, which is very common in many applications such as train timetabling, vehicle routing, and allocation problems.
\begin{assumption}\label{Ab1}
  The entries of the matrix $\A$ are either $1$ or $0$.  The vector $\b$ equals $\textbf{\rm \1}$.
\end{assumption}
Under this assumption, we consider the subproblem \eqref{xjko}. For each $j \in \N_p$,  if the condition $\A_{j}\bx^{t+1}_j \leq \textbf{\rm \1}$  always holds for each iteration of the BCD method, then
  \begin{align}\label{xjkot}
    \bx^{t+1}_j  &\in \mathop{\arg\min}_{\bx_j \in \X_j}~ L^t_j(\bx_j,\lambda,\rho)\\
    &=\mathcal{T}_{\X_j}\left(\c_j+\A_j^{\top}\lambda+\rho\A_j^{\top}\left(\sum\limits_{l\neq j}^p\A_l\bx^t_l(j) -\frac{\textbf{\rm \1}}{2}\right)_{+}\right).\nonumber
  \end{align}
To prove the above derivation, we introduce following two notations at the $t$-th update:
\begin{eqnarray}\label{In}
\begin{array}{ll}
  \I        :=\{i \in \N_m: \sum_{l\neq j}^p\A_{i,l}\x_l^t(j) = 0 \},   \\
  \bar{\I}  :=\{i \in \N_m: \sum_{l\neq j}^p\A_{i,l}\x_l^t(j) \geq 1 \},
  \end{array}
\end{eqnarray}
where $\x^t_l(j)=\left\{\begin{array}{ll}
    \x^{t+1}_l, & \text{if}~ l <j, \\
    \x^t_l,     & \text{if}~ l >j.
  \end{array}\right.$
Obviously, $\I \cap \bar{\I}=\emptyset$ and $\I \cup \bar{\I}=\N_m$. Therefore, we have
\begin{align}
   & \sum\nolimits_{l\neq j}^p\A_{\bar{\I},l}\x^t_l(j) -\frac{\1_{\bar{\I}}}{2}=\left(\sum\nolimits_{l\neq j}^p\A_{\bar{\I},l}\x^t_l(j) -\frac{\1_{\bar{\I}}}{2}\right)_+,\label{AbIj2} \\
   & \sum\nolimits_{l\neq j}^p\A_{\I,l}\x^t_l(j) -\frac{\1_{\I}}{2}=0, \label{AIj2}
\end{align}
and
\begin{eqnarray}\label{AIj}
  \hspace{-3mm}\begin{array}{ll}
    \left(\A_{\I,j}\x_j+\sum\limits_{l\neq j}^p\A_{\I,l}\x^t_l(j) -\1_{\I}\right)_{\!\!+}=\left(\A_{\I,j}\x_j -\1_{\I}\right)_{+}\!=\!\textbf{0},\label{AIj-1} \\
    \A_{\bar{\I},j}\x_j+\sum_{l\neq j}^p\A_{\bar{\I},l}\x^t_l(j) -\1_{\bar{\I}} \geq \textbf{0}.\label{AIj-2}
  \end{array}
\end{eqnarray}
Since the element in $\A_j\x_j$ is either $1$ or $0$, we have
\begin{equation}\label{Ajxj}
  \|\A_j\x_j\|^2=\textbf{1}^{\top}(\A_j\x_j), ~~ \forall~ j\in \N_p.
\end{equation}
Denote $C=\left\|\sum_{l\neq j}^p\A_{\bar{\I},l}\x^t_l(j) -\1_{\bar{\I}}\right\|^2$. Then
\begin{eqnarray}\label{Abar}
\begin{array}{ll}
  &\left\|\left(\A_j\x_j+\sum_{l\neq j}^p\A_{l}\x^t_l(j) -\1\right)_{+}\right\|^2\nonumber\\
  \overset{\eqref{AIj}}{=}&\|\A_{\bar{\I},j}\x_j\|^2+2(\A_{\bar{\I},j}\x_j)^{\top}\left(\sum_{l\neq j}^p\A_{\bar{\I},l}\x^t_l(j) -\1_{\bar{\I}}\right)+C\nonumber\\
  \overset{\eqref{Ajxj}}{=}&2(\A_{\bar{\I},j}\x_j)^{\top}\left(\sum_{l\neq j}^p\A_{\bar{\I},l}\x^t_l(j) -\frac{\1_{\bar{\I}}}{2}\right)+C\nonumber\\
  \overset{\eqref{AbIj2}}{=}&2(\A_{\bar{\I},j}\x_j)^{\top}\left(\sum_{l\neq j}^p\A_{\bar{\I},l}\x^t_l(j) -\frac{\1_{\bar{\I}}}{2}\right)_+ +C\nonumber\\
  \overset{\eqref{AIj2}}{=}& 2(\A_{\bar{\I},j}\x_j)^{\top}\left(\sum_{l\neq j}^p\A_{\bar{\I},l}\x^t_l(j) -\frac{\1_{\bar{\I}}}{2}\right)_+ \\
  &+ 2(\A_{\I,j}\x_j)^{\top}\left(\sum_{l\neq j}^p\A_{\I,l}\x^t_l(j) -\frac{\1_{\I}}{2}\right)_+ +C\nonumber\\
  =&2(\A_j\x_j)^{\top}\left(\sum_{l\neq j}^p\A_l\x^t_l(j) -\frac{\1}{2}\right)_+ +C.
  \end{array}
\end{eqnarray}
Then the iterative scheme for $\x$-update is given by
\begin{eqnarray*}
  \begin{array}{ll}
    \x^{t+1}_j & = \mathop{\arg\min}\limits_{\x_j \in \X_j}~ L^t_j(\x_j,\lambda,\rho)\nonumber                                                                                                                                                                                 \\
               & =\mathop{\arg\min}\limits_{\x_j \in \X_j}~\left\{\c_j^{\top}\x_j+\lambda^{\top}\left(\A_j\x_j - \1\right)        \right.               \\
               & \quad \left.  +\frac{\rho}{2}\left\|\left(\A_j\x_j+\sum_{l\neq j}^p\A_{l}\x^t_l(j) -\1\right)_{+}\right\|^2\right\}\nonumber \\
               & \overset{\eqref{Abar}}{=} \mathop{\arg\min}_{\x_j \in \X_j} \left\{\c_j^{\top}\x_j+\lambda^{\top}\left(\A_j\x_j - \1\right)  \right.               \\
               & \quad \left.
               +\rho(\A_j\x_j)^{\top}\left(\sum_{l\neq j}^p\A_l\x^t_l(j)-\frac{\1}{2}\right)_{+}\right\}\nonumber                 \\
               & =\mathcal{T}_{\X_j}\left(\c_j+\A_j^{\top}\lambda+\rho\A_j^{\top}\left(\sum_{l\neq j}^p\A_l\x^t_l(j) -\frac{\1}{2}\right)_{\!\!+}\right).
  \end{array}
\end{eqnarray*}

We can observe that the condition $\A\x \leq \1$  induces the decomposition of minimizing the augmented Lagrangian function  \eqref{alf} into a set of subproblems with a linear objective function, which makes \eqref{xjko} easier to solve. Therefore, the key point in the derivation of the linearization process is utilizing the fact that the entries in $\A_j\x_j$ are either $1$ or $0$. We next verify this condition is always true in each iteration of the BCD method under the following assumption.
\begin{assumption}\label{cAj}
  Denote by $T_{\A_j}$ the indices of columns of the matrix $\A_j$ containing only zeros and $c_{j_s}$ the $s$-th element of $\c_j$. At least one of the following holds.
  \begin{itemize}
    \item[(i)] For all $j \in \N_p$, there exists $\bx_j \in \X_j$ such that $\A_j\bx_j \leq \textbf{\rm \1}$ and $\{s \in \N_{n_j}: c_{j_s} \ne 0\}\subseteq T_{\A_j}$.
    \item[(ii)] For all $j \in \N_p$, there is at most one nonzero element in the vector $\c_j$ and $\{\bx_j\in \R^{n_j}: \A_j\bx_j \leq \textbf{\rm \1}\} \subseteq \{\bx_j\in \R^{n_j}: \bx_j\in \X_j\}$.
  \end{itemize}
\end{assumption}
\textbf{Remark:}
(a) Assumption \ref{cAj} (i) requires that if an element $\bx_{j_s}$ satisfies $\bx_{j_s}=1$, then at least one of $c_{j_s}$ and the $s$-th column vector of $\A_j$ is zero. Assumption \ref{cAj} (ii) requires that if the block constraint set $\X_j$ is large enough such that  $\A_j\bx_j \leq \textbf{\rm \1}$ holds, then the weight $\c_j$ corresponding to each block variable has only one nonzero element.
(b) Assumption \ref{cAj} usually holds when $\A$ and $\c$ are sparse. 
This is particularly true in the train timetabling problem with the objective of scheduling more trains, as will be shown in Section \ref{sec_app}.


\begin{lemma}\label{Axt1}
  Suppose Assumption \ref{cAj} hold. From any starting point $\bx^0$, the solution generated by the BCD method with the classical update at each iteration satisfies $\A_j\bx_j^{t+1} \leq \textbf{\rm\1}$ for all $j \in \N_p$ and $t=0,1,2,...$.
\end{lemma}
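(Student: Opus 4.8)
The plan is to localize the claim to a single inner step and then exploit a monotonicity property of the quadratic penalty. The assertion is really about one block subproblem: at block $j$ of inner iteration $t$ the classical update returns $\x_j^{t+1}\in\arg\min_{\x_j\in\X_j}L_j^t(\x_j,\lambda,\rho)$, with the other blocks $\x_l^t(j)$, the multiplier $\lambda\ge0$ and the penalty $\rho>0$ entering only as frozen data; so it suffices to show that for arbitrary such data the subproblem $\min_{\x_j\in\X_j}L_j^t(\x_j,\lambda,\rho)$ has a minimizer — and, under Assumption \ref{cAj}, every minimizer — obeying $\A_j\x_j\le\1$, which is why the conclusion is insensitive to the starting point $\x^0$. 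The structural fact I would lean on, granted Assumption \ref{Ab1} ($\A$ has $0/1$ entries and $\b=\1$), is that, up to an additive constant independent of $\x_j$,
$$L_j^t(\x_j,\lambda,\rho)=\c_j^{\top}\x_j+\phi_j(\A_j\x_j),\qquad \phi_j(y):=\lambda^{\top}y+\tfrac{\rho}{2}\bigl\|(y+s_j)_+\bigr\|^2,$$
with $s_j:=\sum_{l\ne j}\A_l\x_l^t(j)-\1\ge-\1$ (since $\A_l\x_l^t(j)\ge0$). As $\lambda\ge0$, $\rho>0$ and $r\mapsto(r)_+^2$ is nondecreasing, $\phi_j$ is coordinatewise nondecreasing in $y$ and \emph{strictly} increasing in coordinate $i$ whenever $y_i+(s_j)_i>0$; and $\A_j\ge0$, so turning any entry of $\x_j$ off only shrinks $\A_j\x_j$ coordinatewise and never raises $\phi_j$. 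The lever is: if $(\A_j\x_j)_i\ge 2$ then $y_i+(s_j)_i\ge 2-1=1>0$, so $\phi_j$ is strictly decreasing along that coordinate, and a constraint violation in block $j$ can always be traded for a strict decrease of $\phi_j$ — provided this can be done inside $\X_j$ without changing $\c_j^{\top}\x_j$.

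Under Assumption \ref{cAj}(ii) both provisos are automatic. Here $\c_j$ has at most one nonzero entry $c_{j_{s_0}}$, and every binary $\x_j$ with $\A_j\x_j\le\1$ lies in $\X_j$; in particular $\textbf{0}\in\X_j$, and the $s_0$-th unit vector $e_{s_0}\in\X_j$ since $\A_je_{s_0}$ is a single $0/1$ column of $\A_j$, hence $\le\1$. Let $\x_j^*$ minimize $L_j^t$ over $\X_j$ and suppose $(\A_j\x_j^*)_i\ge2$ for some $i$. If $x^*_{j,s_0}=0$ (or $\c_j=\textbf{0}$), compare $\x_j^*$ with $\textbf{0}$: the linear terms coincide and $\phi_j(\A_j\x_j^*)>\phi_j(\textbf{0})$ by strict monotonicity in coordinate $i$, contradicting optimality. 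If $x^*_{j,s_0}=1$, compare $\x_j^*$ with $e_{s_0}$: the linear terms again coincide (each equals $c_{j_{s_0}}$), while $\A_je_{s_0}\le\A_j\x_j^*$ with strict inequality in coordinate $i$, so $\phi_j$ strictly drops — again a contradiction. Hence every minimizer satisfies $\A_j\x_j\le\1$, so in particular $\A_j\x_j^{t+1}\le\1$.

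Under Assumption \ref{cAj}(i) the cost-carrying indices $\{s:c_{j_s}\ne0\}$ lie among the all-zero columns $T_{\A_j}$, so $\c_j^{\top}\x_j$ and $\phi_j(\A_j\x_j)$ act on disjoint coordinate blocks, and a reference point $\bar\x_j\in\X_j$ with $\A_j\bar\x_j\le\1$ is available, which keeps the subproblem feasible. If a minimizer $\x_j^*$ had $(\A_j\x_j^*)_i\ge2$, at least two columns of $\A_j$ carrying a $1$ in row $i$ are switched on; switching one of them, say the $s$-th, off — i.e. setting $\tilde\x_j:=\x_j^*-e_s$ — leaves $\c_j^{\top}\x_j$ unchanged (that column is nonzero, so $s\notin T_{\A_j}$ and $c_{j_s}=0$) while strictly decreasing $\phi_j$, since $\A_j\tilde\x_j\le\A_j\x_j^*$ with strict inequality in coordinate $i$. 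Thus $L_j^t(\tilde\x_j)<L_j^t(\x_j^*)$, contradicting optimality — as soon as we know $\tilde\x_j\in\X_j$. Checking this membership after the switch-off is the one thing the bare monotonicity argument does not give, and I expect it to be the main obstacle; it is, however, exactly what Assumption \ref{cAj} is designed to supply. Case (ii) puts the whole region $\{\x_j\in\{0,1\}^{n_j}:\A_j\x_j\le\1\}$ inside $\X_j$ by fiat; in case (i) the disjointness of the cost- and constraint-coordinates together with the availability of $\bar\x_j$ lets the modified vector retain $\B_j$-feasibility — as happens for the packing-type block constraints in the train-timetabling and routing models of Section \ref{sec_app}, where $\B_j$ is nonnegative and switching a variable off cannot create infeasibility. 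Once $\X_j$-membership is in hand in each case the contradiction closes, giving $\A_j\x_j^{t+1}\le\1$ at block $j$; since the argument applies verbatim to the data arising at every inner step, $\A_j\x_j^{t+1}\le\1$ holds for all $j$ and all $t$ from any starting point $\x^0$, which also certifies a posteriori that the simplified rule \eqref{xjkot} produces the same iterates as \eqref{xjko}.
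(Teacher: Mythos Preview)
Your strategy matches the paper's: argue by contradiction, then construct a competitor in $\X_j$ with the same linear cost and strictly smaller penalty by switching off coordinates that feed the violated row. Your monotonicity framing via $\phi_j$ is cleaner than the paper's route (the paper in effect shows that the linearized objective minorizes $L_j^t$ and coincides with it on $\{\A_j\x_j\le\1\}$, then derives the same contradiction), and in Case~(ii) your comparison with $\textbf{0}$ or $e_{s_0}$ is complete and tidier than the paper's local swap.

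The gap you flag in Case~(i) is real, and you are right that it is the crux. Assumption~\ref{cAj}(i) guarantees only that \emph{some} $\x_j\in\X_j$ satisfies $\A_j\x_j\le\1$ and that cost-carrying coordinates lie in all-zero columns of $\A_j$; it says nothing about downward closure of $\X_j$, so there is no a priori reason $\tilde\x_j=\x_j^*-e_s$ remains in $\X_j$. Your appeal to packing-type blocks ($\B_j\ge 0$, so that switching a variable off preserves $\B_j\x_j\le\d_j$) is the correct side-condition and does hold in the applications of Section~\ref{sec_app}, but it is an additional hypothesis, not a consequence of Assumption~\ref{cAj}(i). For what it is worth, the paper's own proof shares exactly this omission: it builds $\bar\x_j$ from $\x_j^{t+1}$ by switching off coordinates in $S_j$ and then invokes optimality of $\x_j^{t+1}$ over $\X_j$ to compare with $\bar\x_j$, without ever verifying $\bar\x_j\in\X_j$. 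So your proposal reproduces the paper's argument in both its strengths and its unstated assumption; you have simply been explicit about where the latter enters.
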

\begin{proof}
Let $\x_j^{t+1}$ be the solution generated by the BCD method after $t$-th classical update.
We argue by contradiction and suppose that there exists  $i \in \N_m$ such that $\A_{i,j}\x_j^{t+1}>1$. Then for any $\bar{\x}_j \in \X_j$ satisfying $\A_j\bar{\x}_j \leq 1$, we have
$$L^t_j(\x_j^{t+1},\lambda,\rho) < L^t_j(\bar{\x}_j,\lambda,\rho),~~\forall j \in \N_p,$$
which implies that for any $ j \in \N_p$,
$$\left(\c_j+\A_j^{\top}\lambda+\rho\A_j^{\top}\left(\sum\limits_{l\neq j}^p\A_l\x^t_l(j) -\frac{\1}{2}\right)_{\!\!+}\right)^{\top} (\x_j^{t+1}-\bar{\x}_j) <   0,$$
then
\begin{eqnarray}\label{cont}
\begin{array}{ll}
  &\c_j^{\top} (\x_j^{t+1}-\bar{\x}_j)+\lambda^{\top} (\A_j\x_j^{t+1}-\A_j\bar{\x}_j) +\\ &\rho\left(\sum_{l\neq j}^p\A_l\x^t_l(j) -\frac{\1}{2}\right)_{\!\!+}^{\top} (\A_j\x_j^{t+1}-\A_j\bar{\x}_j) <   0.
  \end{array}
\end{eqnarray}

We consider following two cases corresponding to Assumption \ref{cAj}:

Case 1: Assumption \ref{cAj} (i) implies  $c_{j_s}=0$ for any $s \in S_j:=\{s \in \N_{n_j}:\x_{j_s}^{t+1}=1, \A_{i,j_s}=1\}$. Taking $\bar{\x}_j=(\x_{j_1}^{t+1},...,\x_{j_s}^{t+1}-1,...,\x_{j_{n_j}}^{t+1})$ with $s \in S_j$ such that $\A_j\bar{\x}_j \leq \1$ gives us
\begin{equation}\label{cxi}
\hspace{-2mm}  \c_j^{\top}(\x_j^{t+1}-\bar{\x}_j)=\sum\nolimits_s c_{j_s}=0~ \text{and}~ \A_{i,j}\x_j^{t+1}-\A_{i,j}\bar{\x}_j \geq 1.
\end{equation}
This contradicts \eqref{cont} since $\lambda\geq 0, \rho > 0$.

Case 2: Assumption \ref{cAj} (ii) implies that there exists
\begin{equation}\label{xjii}
  \bar{\x}_j=(\x_{j_1}^{t+1},...,\bar{\x}_{j_{s'}},...,\x_{j_s}^{t+1}-1,...,\x_{j_{n_j}}^{t+1}) ~\text{with}~\bar{\x}_{j_{s'}}=1
\end{equation}
for any $s \in S_j$ and $s' \neq s $ such that $\A_j\bar{\x}_j \leq \1$. If $c_{j_{s'}} \geq 0$, we can apply the same procedure in Case 1 and arrive at the contradiction. If $c_{j_{s'}}<0$, we take $\bar{\x}_j$ in \eqref{xjii}, then $\x_{j_{s'}}^{t+1}-\bar{\x}_{j_{s'}} \leq 0$. Hence,
\begin{eqnarray*}
  \c_j^{\top}(\x_j^{t+1}-\bar{\x}_j)=c_{j_{s'}}(\x_{j_{s'}}^{t+1}-\bar{\x}_{j_{s'}}) \geq 0, \
  \A_{i,j}\x_j^{t+1}-\A_{i,j}\bar{\x}_j \geq 1,
\end{eqnarray*}
which is a contradiction that completes the proof.
\qed
\end{proof}

Overall, we can reformulate the subproblem \eqref{xjko} into \eqref{xjkot} based on the special structure of the model \eqref{bLP} under certain conditions, and thus make it easier to solve. It is worth noting that if Assumption \ref{cAj} is not satisfied, we can consider \eqref{xjkot} as a linear approximation of \eqref{xjko}.
This linearization technique is widely used, including in works \cite{yao2019admm} and \cite{ZHANG2020102823}. However, they lacked theoretical guarantees. Our main result shows that under specified assumptions, using the update \eqref{xjkot} in the ALM-C method is equivalent to solving \eqref{xjko} exactly, theoretically ensuring the effectiveness of our method.

\subsection{Finding a good feasible solution by set packing}
Since the BCD method may not always yield a feasible solution\cite{gonzalez2011heuristic}, we can adopt several refinement strategies that are very useful to find a feasible solution to the problem \eqref{bLP} in practice.  
Very often, \eqref{bLP} represents a problem where we  optimize under limited resources, and this provides us a view of set packing problems. Since we produce candidate solutions all along, a natural idea is to utilize past iterates to construct a feasible solution. A simple strategy could be constructing a solution pool for each $j$ that includes the past BCD iterates $V^k_j = \{\x^1_j,\x^2_j,...,\x^k_j\}, j = 1,..., p$. Intuitively, the solution pool may include feasible or ``nice'' solutions in some sense.

To illustrate the above approach, we first introduce the iterative sequential technique.
\subsubsection{A sweeping technique}
The most simple technique is probably to select a subset of blocks, one by one, until the infeasibility is detected. We present this sequential method in Algorithm \ref{simpleseq}, which can be understood by simply sweeping the blocks and selecting a candidate solution from $V_j^k$ if feasibility is still guaranteed, otherwise simply skip the current block and continue.

\begin{algorithm}
  \caption{A sweeping technique}
  \label{simpleseq}
  \KwIn{The set of past BCD solutions $V^k_j, j = 1,..., p$.}
  \KwOut{A feasible solution $\hat \x^k$}
  \While{the termination is not satisfied}
  {\For{$j=1,2,...,p$}
    {Select $\v_j \in V_j^k$ \;
      \eIf{$\A_j \textbf{\rm \v}_j +\sum\limits_{l=1}^{j-1} \A_l\hat \bx^k_l - \b \le 0$}
      { let $\hat \x_j^{k}=\v_j$,}{
        let $\hat \x_j^{k}=0$\;}
    }
  }
\end{algorithm}

\subsubsection{A packing technique}
Let us further explore the idea of selecting solutions in a systematic way.
Formally, for each block $j \in \N_p$, we introduce a set of binary variables $\l_j$ that defines the current selection:
$$
  \hat \x_j^k = X_j^k\l_j,~ \l_j \in \{0, 1\}^{k},
$$
where $X_j^k=[\x^1_j;\x^2_j;...;\x^k_j]$. We consider the following problem:
\begin{subequations}\label{mis_model}
  \begin{align}
    \min_{\l_j \in \{0, 1\}^{k}} ~    & ~ \sum\nolimits_{j=1}^p c_j^\top X_j^k \l_j             \\
    \label{eq.matching}\text{s.t.}~~~ & ~ \l_j^\top \1 \le 1,~\forall j=1,...,p, \\
    \label{eq.knapsack}~              & ~ \sum\nolimits_{j=1}^p \A_jX_j^k \l_j \le \b.
  \end{align}
\end{subequations}
In view of \eqref{eq.matching}, one may recognize the above problem as a restricted master problem appearing in column generation algorithms where \eqref{eq.knapsack} stands for a set of knapsack constraints.

Specifically,  the coupling constraints \eqref{eq.knapsack} in our model are cliques, representing complete subgraphs where each pair of distinct nodes is connected. This allows us to reformulate the model as a maximum independent set problem.
Therefore,  we can find a feasible solution $\x^*$ which satisfies $\A\x^* \leq \b$ by solving the problem \eqref{mis_model}. 
 Since this problem is still hard, we only solve the relaxation problem of the maximal independent set. Now we go into details. Since the knapsack \eqref{eq.knapsack} means the candidate solutions may conflict, we can construct a conflict graph $F=(V,E)$. In this graph, $V$ represents the set of nodes, where each node corresponds to a solution generated as the algorithm proceeds, and $E$ is the set of edges that connect two conflicting solutions, meaning they violate the coupling constraints.
In this view, we only have to maintain the graph $F$ and find a maximal independent set $\mathcal{K}$. Therefore, the output feasible point $\x_j^*$ corresponds to $\v_j \in \mathcal{K}$ for each $j \in \N_p$. If $\v_j \notin \mathcal{K}$, then $\x_j^*=0$. We summarize the maximal independent set technique in Algorithm \ref{MIS}. We also note that Algorithm \ref{simpleseq} can be seen as a special case of Algorithm \ref{MIS}.

\begin{algorithm}
  \caption{A packing (maximal independent set) technique}
  \label{MIS}
  \KwIn{The BCD solution $\x^k$, last conflict graph \(F^{k-1} = (V^{k-1}, E^{k-1})\)}
  \KwOut{A feasible solution $\x_*^k$}
  \textbf{Step 1: Conflict graph update}\\
  \For{ $j =1,2,...,p$}{
    Collect new candidate paths \(\tilde V^{k}_j\) for block $j$\;
    (\textbf{1.1 node-update}) \(V^k \leftarrow V^{k-1} + \tilde V^{k}_j\)\;
    (\textbf{1.2 self-check}) \(E^k \leftarrow E^{k-1} + \{(p, p') \mid \forall p\neq p', p \in V^{k}_j, p'\in V^{k}_j\}\)\;
      (\textbf{1.3 edge-completion}) $E^k \leftarrow E^k +  \Big\{(p, p')\mid$
       $\left. \text{if}~ p, p' \text{are compatible}~\text{for}~\forall p \in  \tilde{V}^{k}_j, p' \in V^k \backslash V^k_j \right\}$,  here `compatible' means that these two nodes (block variables) satisfy the binding constraints;
  }
  \textbf{Step 2: Maximal independent set (MIS) for a feasible solution}\\
  Select a candidate solution set \(K \subset V^k\)\;
  \For{\(v \in K\)}{
    Compute a maximal independent set  \(\mathcal K(v) \) with respect to \(v\) in \(O(|E^k|)\) iterations\;
  }
  Compute \(\x^* = \arg \max_v \mathcal K(v)\).
\end{algorithm}

Based on the above-mentioned techniques, we improve the ALM and propose a customized ALM  in Algorithm \ref{PBALM}.
\begin{algorithm}[htbp]
  \caption{A customized ALM}
  \label{PBALM}
  \KwIn{ $\x^0,\lambda^0,\rho^0>0$ and the best objective function value $f^* = +\infty$. Set $k=0$.}
    \KwOut{A local (global) optimal solution $ \x^*$.}
  \While{the termination is not satisfied}
  {\textbf{Step 2: Construct solution using Alg. 1}\\
    Update the BCD solution $\x^{k+1}$ by the procedure in lines 2-5 of Alg. \ref{ALM}\;
    \textbf{Step 3: Generate a feasible solution}\\
    \eIf{the BCD solution is not feasible}
    {transform the BCD solution to a feasible solution $\bar{\x}^{k+1}$ by calling a refinement method in Alg. \ref{simpleseq} or \ref{MIS}\;} {let $\bar{\x}^{k+1}=\x^{k+1}$\; }
    \textbf{Step 4:  Update the best solution}\\
    \If{$\c^{\top}\bar{\x}^{k+1} \leq f^*$}
    {Set $f^*=\c^{\top}\bar{\x}^{k+1}$ and $\x^*= \bar{\x}^{k+1}$;}
     Update the Lagrangian multipliers  $\lambda^{k+1}$  by \eqref{lam} and the penalty coefficient $\rho^{k+1}$ by \eqref{rho}. Let $k = k+1$.}
\end{algorithm}

Although Algorithms \ref{simpleseq} and \ref{MIS} can help us find a feasible solution to problem \eqref{bLP}, the quality of output by them remains unjustified.
To evaluate and improve the quality of the solution, the simple way is to estimate the upper and lower bounds of the objective function value, and then calculate the gap between them. The smaller the gap, the better the current solution. Obviously, our method can provide an upper bound of the objective function value. As for the generation of the lower bound, we can use the following method. (i) LP relaxation: by directly relaxing the binary integer variables to $[0,1]$ continuous variables, we solve a linear programming problem exactly to obtain the lower bound of the objective function value. (ii) LR relaxation: since the Lagrangian dual problem \eqref{LR} is separable, we can solve the decomposed subproblems exactly to obtain the lower bound of the objective function value.

In general, the Lagrangian dual bound is at least as tight as the linear programming bound obtained from the usual linear programming relaxation \cite{conforti2014integer}. Note that the relaxation must be solved to optimality to yield a valid bound. Therefore, we can use a combination of LR method and Alg. \ref{PBALM}. To be specific, LR aims at generating the lower bound of the objective function value in \eqref{bLP}, the solution is usually infeasible. Steps 2 and 3 in Alg. \ref{PBALM} are used for generating feasible solutions of \eqref{bLP}. The smaller the gap between the lower bound and the upper bound, the closer the feasible solution is to the global optimal solution of the problem. It can be seen from the iterative procedure that after many iterations, this algorithm can generate many feasible solutions, and the lower bound of the model is constantly improving. Finally, we select the best solution. The combination of these two methods takes advantage of the ALM and the LR method, it not only finds a good feasible solution but also evaluates the quality of the solution. 

Compared with the ADMM-based method in \cite{ZHANG2020102823}, we utilize BCD method to perform multiple iterations to solve the subproblem \eqref{xsub} until the solutions remain unchanged, which can improve the solution accuracy of the subproblem, thereby reducing the total number of iterations. Moreover, we adopt different refinement techniques to further enhance the solution quality.


\section{Convergence Analysis}\label{sec_con}
In this section, we present the convergence analysis of the block coordinate descent method for solving the augmented Lagrangian relaxation problem \eqref{df} and the augmented Lagrangian method for solving the dual problem \eqref{zdp}.  Unless otherwise stated, the convergence results presented in this section do not rely on Assumptions \ref{Ab1} and \ref{cAj}.
\subsection{Convergence of BCD}
We begin this section with the property of augmented Lagrangian function \eqref{alf} that is fundamental in the convergence analyis.
\begin{pro}\label{lipschitz}
  The gradient of $L(\bx,\lambda,\rho)$ at $\bx$  is Lipschitz continuous with constant $\kappa$ on $\X$, namely,
  $$\|\nabla L(\bx,\lambda,\rho)-\nabla  L(\bar{\bx},\lambda,\rho)\|\leq \kappa\|\bx-\bar{\bx}\|$$
  for all  $\bx, \bar{\bx}\in \X$, where $\kappa=\rho\|\A\|_2^2$. Furthermore,
  \begin{equation}\label{smooth}
    L(\bar{\bx},\lambda,\rho) \leq L(\bx,\lambda,\rho) + \langle \bar{\bx}-\bx, \nabla L(\bx,\lambda,\rho) \rangle+\frac{\kappa}{2}\|\bar{\bx}-\bx\|^2
  \end{equation}
  for all $\bx, \bar{\bx}\in \X$.
\end{pro}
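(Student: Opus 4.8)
The plan is to compute $\nabla_{\mathbf{x}}L$ explicitly and exploit the fact that the only $\mathbf{x}$-dependent part of the gradient is $\rho\,\mathbf{A}^{\top}(\mathbf{A}\mathbf{x}-\mathbf{b})_{+}$. First I would observe that, although $L$ involves the nonsmooth map $t\mapsto t_{+}$, the quadratic penalty is in fact continuously differentiable: the scalar function $g(t)=\tfrac12(t_{+})^{2}$ has derivative $g'(t)=t_{+}$, which is continuous, so by the chain rule
$$\nabla_{\mathbf{x}}L(\mathbf{x},\lambda,\rho)=\mathbf{c}+\mathbf{A}^{\top}\lambda+\rho\,\mathbf{A}^{\top}(\mathbf{A}\mathbf{x}-\mathbf{b})_{+},$$
and this expression is valid for every $\mathbf{x}\in\mathbb{R}^{n}$, not merely on $\mathcal{X}$; hence $L(\cdot,\lambda,\rho)\in C^{1}(\mathbb{R}^{n})$. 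This extension to all of $\mathbb{R}^{n}$ is convenient because $\mathcal{X}$ itself is discrete, so "Lipschitz on $\mathcal{X}$" and the integral argument below are best read on the ambient space.

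The second step is the Lipschitz estimate. Since $\mathbf{c}+\mathbf{A}^{\top}\lambda$ is constant in $\mathbf{x}$,
$$\nabla L(\mathbf{x},\lambda,\rho)-\nabla L(\bar{\mathbf{x}},\lambda,\rho)=\rho\,\mathbf{A}^{\top}\!\left[(\mathbf{A}\mathbf{x}-\mathbf{b})_{+}-(\mathbf{A}\bar{\mathbf{x}}-\mathbf{b})_{+}\right].$$
The key elementary fact is that the positive-part map (equivalently, the Euclidean projection onto $\mathbb{R}^{m}_{+}$) is nonexpansive: $|a_{+}-b_{+}|\le|a-b|$ for scalars, hence $\|(u)_{+}-(v)_{+}\|\le\|u-v\|$ coordinatewise. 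Combining this with $\|\mathbf{A}^{\top}\|_{2}=\|\mathbf{A}\|_{2}$ yields
$$\|\nabla L(\mathbf{x},\lambda,\rho)-\nabla L(\bar{\mathbf{x}},\lambda,\rho)\|\le\rho\|\mathbf{A}\|_{2}\,\|\mathbf{A}(\mathbf{x}-\bar{\mathbf{x}})\|\le\rho\|\mathbf{A}\|_{2}^{2}\,\|\mathbf{x}-\bar{\mathbf{x}}\|,$$
which is exactly the claim with $\kappa=\rho\|\mathbf{A}\|_{2}^{2}$.

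Finally, the inequality \eqref{smooth} is the standard descent lemma, which I would derive from the Lipschitz bound via the fundamental theorem of calculus. Writing $\phi(s)=L(\mathbf{x}+s(\bar{\mathbf{x}}-\mathbf{x}),\lambda,\rho)$, one has $L(\bar{\mathbf{x}},\lambda,\rho)-L(\mathbf{x},\lambda,\rho)=\int_{0}^{1}\langle\nabla L(\mathbf{x}+s(\bar{\mathbf{x}}-\mathbf{x})),\bar{\mathbf{x}}-\mathbf{x}\rangle\,ds$; then add and subtract $\langle\nabla L(\mathbf{x}),\bar{\mathbf{x}}-\mathbf{x}\rangle$, bound the remaining integrand by $\kappa s\|\bar{\mathbf{x}}-\mathbf{x}\|^{2}$ using Cauchy–Schwarz together with step two, and use $\int_{0}^{1}\kappa s\,ds=\kappa/2$. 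The only point requiring genuine care — the "main obstacle," such as it is — is the smoothness claim in step one: one must check that squaring the positive part removes the kink so that $L$ is truly $C^{1}$ (making the gradient formula and the integral representation legitimate), and that the segment $[\mathbf{x},\bar{\mathbf{x}}]$ may be taken inside the convex domain $\mathbb{R}^{n}$ even though the optimization set $\mathcal{X}$ is discrete. Everything else is a routine chain of norm inequalities.
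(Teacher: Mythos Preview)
Your argument is correct and, for the Lipschitz estimate, matches the paper's proof line for line: both compute $\nabla L$ explicitly, drop the constant part $\mathbf{c}+\mathbf{A}^{\top}\lambda$, invoke nonexpansiveness of $(\cdot)_{+}$, and apply $\|\mathbf{A}^{\top}\|_{2}=\|\mathbf{A}\|_{2}$ twice. The only (minor) difference is in deriving the descent inequality~\eqref{smooth}: you use the standard integral representation via the fundamental theorem of calculus, whereas the paper instead observes that the Lipschitz bound implies $\langle\nabla L(\mathbf{x})-\nabla L(\bar{\mathbf{x}}),\mathbf{x}-\bar{\mathbf{x}}\rangle\le\kappa\|\mathbf{x}-\bar{\mathbf{x}}\|^{2}$, hence $f(\mathbf{x})=\tfrac{\kappa}{2}\|\mathbf{x}\|^{2}-L(\mathbf{x},\lambda,\rho)$ has a monotone gradient and is therefore convex, and then reads off~\eqref{smooth} from the first-order convexity inequality for $f$. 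Both routes are standard and equally short; your integral argument has the mild advantage of not implicitly appealing to the equivalence between gradient monotonicity and convexity.
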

\begin{proof}
For any $\bx, \bar{\bx}\in \X$,
\begin{align*}
 & \|\nabla L(\bx,\lambda,\rho)-\nabla L(\bar{\bx},\lambda,\rho)\|\\
  =&\left\|\rho\A^{\top}(\A\x-\b)_+-\rho\A^{\top}(\A\bar{\x}-\b)_+\right\|\\
  \leq&\rho\|\A \|_2\left\|(\A\x-\b)_+ -(\A\bar{\x}-\b)_+\right\|\\
  \leq&\rho\|\A \|_2\left\|\A\x-\A\bar{\x}\right\| \leq \rho\|\A \|^2_2\left\|\x-\bar{\x}\right\|.
\end{align*}
Then we have
\begin{align*}
  &\langle\nabla L(\bx,\lambda,\rho)-\nabla L(\bar{\bx},\lambda,\rho),\bx-\bar{\bx} \rangle\\
  \leq& \|\nabla L(\bx,\lambda,\rho)-\nabla L(\bar{\bx},\lambda,\rho)\|\|\bx-\bar{\bx}\| \leq \kappa\|\bx-\bar{\bx}\|^2.
\end{align*}
It follows from the convexity of the function $f(\x)=\frac{\kappa}{2}\|\x\|^2-L(\bx,\lambda,\rho)$ that
$$f(\bar{\bx}) \geq f(\x)+\nabla f(\x)^{\top}(\bx-\bar{\bx}),$$
which implies the estimate \eqref{smooth}.
\qed
\end{proof}

Considering two different updates in the BCD method, we first summarize the convergence property of the BCD method for solving \eqref{xjko}, which has been presented in \cite{jager2020blockwise}. Before that, we give the definition of the blockwise optimal solution, which is also called coordinatewise minimum point in \cite{tseng2001convergence}. Given parameters $\lambda$ and $\rho$, a feasible solution $\x^*$ is called a blockwise optimal solution of the problem \eqref{df}  if for each $j \in \N_p$, we have for all $\x=(\x_1^*,...,\x_{j-1}^*,\x_j,\x_{j+1}^*,...,\x_p^*)\in \X$,
$L(\x^*,\lambda,\rho) \leq L(\x,\lambda,\rho).$
\begin{lemma}\label{bos} Suppose Assumptions \ref{Ab1} and \ref{cAj} hold. If the starting point satisfies $\bx^0 \in \X$, then the BCD method for solving \eqref{xjko} is always executable and terminates after a finite number of iterations with a blockwise optimal solution of the problem \eqref{df}.
\end{lemma}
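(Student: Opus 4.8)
The plan is to read off executability from the reduction established in Lemma~\ref{Axt1}, and to obtain finite termination from a monotone-descent argument combined with the finiteness of $\X$, with a tie-breaking convention handling the only delicate point. Throughout, $\lambda\ge 0$ and $\rho>0$ denote the fixed multipliers in \eqref{xjko}.

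First I would settle executability. Since $\X\neq\emptyset$ and $\B$ is block diagonal, each $\X_j$ is a nonempty finite set, so the block subproblem \eqref{xjko} attains its minimum and the cyclic update is well defined for every $j$ and $t$; starting from $\bx^0\in\X$, every iterate stays in $\X$. Under Assumptions~\ref{Ab1} and \ref{cAj}, Lemma~\ref{Axt1} guarantees $\A_j\bx_j^{t+1}\le\1$ for all $j,t$, so the derivation culminating in \eqref{xjkot} shows that each step \eqref{xjko} can in fact be carried out by a single evaluation of the linear operator $\mathcal{T}_{\X_j}$ over the ``nice'' set $\X_j$; hence the BCD method is not merely well defined but practically executable.

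Next I would establish monotone descent and finite termination. Because $\bx_j^{t+1}$ minimizes $L_j^t(\cdot,\lambda,\rho)$ exactly, each block step gives $L(\bx^t(j+1),\lambda,\rho)\le L(\bx^t(j),\lambda,\rho)$, and composing over $j=1,\dots,p$ yields $L(\bx^{t+1},\lambda,\rho)\le L(\bx^{t},\lambda,\rho)$. The decisive observation is that $L(\cdot,\lambda,\rho)$ takes only finitely many values on the finite set $\X$, so the nonincreasing sequence $\{L(\bx^t,\lambda,\rho)\}$ can strictly decrease only finitely many times. To translate this into termination of the iterates I would adopt the natural tie-breaking convention in \eqref{xjko}: set $\bx_j^{t+1}=\bx_j^{t}$ whenever $\bx_j^{t}\in\arg\min_{\x_j\in\X_j}L_j^t(\x_j,\lambda,\rho)$. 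Then each block step either strictly decreases $L$ or changes neither $\bx_j$ nor $L$; hence if $L$ fails to strictly decrease over a whole cycle $t\to t+1$, then no block moved, i.e.\ $\bx^{t+1}=\bx^{t}$. As strict decreases occur only finitely often, such a cycle is reached after finitely many iterations, at which point the stopping test $\bx^{(t+1)}=\bx^{(t)}$ of Algorithm~\ref{ALM} is activated.

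Finally I would verify that the output is blockwise optimal. Suppose the method stops at $\bx^*:=\bx^{t+1}=\bx^{t}$. Since $\bx_l^{t+1}=\bx_l^{t}$ for every $l$, we have $\bx^t(j)=\bx^*$, so $L_j^t(\x_j,\lambda,\rho)=L(\x_1^*,\dots,\x_{j-1}^*,\x_j,\x_{j+1}^*,\dots,\x_p^*,\lambda,\rho)$ for each $j$; as the update always returns a minimizer of $L_j^t(\cdot,\lambda,\rho)$ over $\X_j$ and $\bx_j^{t+1}=\bx_j^{t}$, the point $\bx_j^*$ minimizes $L$ in its $j$-th block with the other blocks fixed at $\bx^*$, which is precisely the definition of a blockwise optimal solution of \eqref{df}. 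The only genuinely delicate step is the tie-breaking convention: without it the iterates could cycle indefinitely among distinct points of equal objective value even after $L$ has stabilized, so a deterministic selection rule of this type — implicit in the treatment of \cite{jager2020blockwise} — is what makes finite termination go through, while the remaining arguments are routine bookkeeping.
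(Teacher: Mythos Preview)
Your proposal is correct and follows essentially the same approach as the paper: executability from boundedness (finiteness) of each $\X_j$, finite termination from the monotone descent of $L$ combined with the fact that $L$ assumes only finitely many values on $\X$, and blockwise optimality directly from the definition once $\bx^{t+1}=\bx^{t}$. Your explicit tie-breaking convention is more careful than the paper's sketch, which tacitly relies on it when passing from ``$L$ becomes constant'' to termination.
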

\begin{proof}
If Assumptions \ref{Ab1} and \ref{cAj} hold,  Lemma \ref{Axt1} tells us that the subproblem \eqref{xjko} can be solved exactly. (i) Since the constraint set of each subproblem \eqref{xjko} is bounded, then all subproblems have an optimal solution. Therefore, the BCD method for solving \eqref{xjko} is executable. (ii) The result in (i) illustrates that the sequence $\{\x^t\}$ generated by the BCD method exists. Thus the sequence of objective function values $\{L(\bx^{t},\lambda,\rho)\}$ can only take finitely many different values. This together with the monotonically decreasing property of the function $L(\bx,\lambda,\rho)$ yields that $L(\bx^{t},\lambda,\rho)$ must become a constant. Then the BCD method is executable. (iii) By the definition of the blockwise optimal solution and the fact that the subproblem \eqref{xjko} can be solved exactly, we arrive at the conclusion.
\qed
\end{proof}

A similar conclusion can be found in \cite{jager2020blockwise}, but it does not clarify how to solve the subproblem. Note that each (global) optimal solution of the model \eqref{df} is blockwise optimal, but not vice versa. Subsequently, we analyze the convergence of \eqref{xjkl}. The following lemma ensures decreasing the function value of $L$ after each iteration if $\bx^{t+1}\neq\bx^t$ and the step size is chosen properly.
\begin{lemma}\label{tauka}
  Let $\{\bx^t\}_{t \in \N}$
  be a sequence generated by \eqref{xjkl}, we obtain
  \begin{equation}\label{lx}
    (\frac{1}{2\tau}-\frac{\kappa}{2})\|\bx^{t+1}-\bx^t\|^2 \leq L(\bx^{t}, \lambda,\rho)-L(\bx^{t+1},\lambda,\rho).
  \end{equation}
\end{lemma}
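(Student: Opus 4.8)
The plan is to exploit the descent lemma \eqref{smooth} from Proposition \ref{lipschitz}, which gives a quadratic upper bound on $L$ in terms of a single step of the gradient, and to combine it with the optimality of each block update \eqref{xjkl}. The key observation is that although \eqref{xjkl} updates one block $\x_j$ at a time, the proximal-linear subproblem is built precisely so that $\x_j^{t+1}$ is a minimizer of the linearized-plus-quadratic surrogate over $\X_j$, hence it does at least as well as the ``no move'' choice $\x_j = \x_j^t$; summing these per-block inequalities across $j = 1,\dots,p$ telescopes into a statement about the full step $\x^t \to \x^{t+1}$.

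Concretely, first I would fix the outer parameters $\lambda, \rho$ and write $\kappa = \rho\|\A\|_2^2$. For each block index $j$, since $\x_j^{t+1}$ minimizes $\langle \x_j - \x_j^t, g_j(\x^t)\rangle + \frac{1}{2\tau}\|\x_j - \x_j^t\|^2$ over $\x_j \in \X_j$ and $\x_j^t \in \X_j$ is feasible, plugging in $\x_j = \x_j^t$ yields
\begin{equation*}
  \langle \x_j^{t+1} - \x_j^t, g_j(\x^t)\rangle + \frac{1}{2\tau}\|\x_j^{t+1} - \x_j^t\|^2 \le 0.
\end{equation*}
Next, apply the descent inequality \eqref{smooth} to the pair of full iterates $\bx = \x^t(j)$ and $\bar{\bx} = \x^t(j+1)$, which differ only in the $j$-th block; because the gradient of $L$ restricted to that block at $\x^t(j)$ is exactly $g_j(\x^t)$, this gives
\begin{equation*}
  L(\x^t(j+1),\lambda,\rho) \le L(\x^t(j),\lambda,\rho) + \langle \x_j^{t+1} - \x_j^t, g_j(\x^t)\rangle + \frac{\kappa}{2}\|\x_j^{t+1} - \x_j^t\|^2.
\end{equation*}
Substituting the first inequality to eliminate the inner product produces $L(\x^t(j+1),\lambda,\rho) \le L(\x^t(j),\lambda,\rho) - \big(\tfrac{1}{2\tau} - \tfrac{\kappa}{2}\big)\|\x_j^{t+1} - \x_j^t\|^2$. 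Summing over $j = 1,\dots,p$, using $\x^t(1) = \x^t$ and $\x^t(p+1) = \x^{t+1}$, and noting $\sum_j \|\x_j^{t+1} - \x_j^t\|^2 = \|\x^{t+1} - \x^t\|^2$ yields exactly \eqref{lx}.

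There is a minor subtlety to check rather than a genuine obstacle: applying \eqref{smooth} blockwise requires that the Lipschitz/descent bound of Proposition \ref{lipschitz}, stated for the full gradient on $\X$, restricts correctly to the $j$-th block coordinate slice with the \emph{same} constant $\kappa$. This holds because freezing all but one block is a composition with a linear embedding of norm one, so the block gradient is $\kappa$-Lipschitz as well; I would state this in one line. The main thing to be careful about is bookkeeping: making sure the gradient appearing in the descent step is evaluated at $\x^t(j)$ (the partially updated point) and matches the $g_j(\x^t)$ used in \eqref{xjkl}, and that the telescoping of the $L$-values is set up with the correct indices $\x^t(j)$. No convexity of $L$ in $\x$ is needed beyond \eqref{smooth}, and the binary structure \eqref{x1x} is not used here — that simplification only matters for rewriting the subproblem, not for this descent estimate.
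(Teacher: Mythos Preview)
Your proposal is correct and follows essentially the same route as the paper: derive the per-block inequality $\langle \x_j^{t+1}-\x_j^t, g_j(\x^t)\rangle + \tfrac{1}{2\tau}\|\x_j^{t+1}-\x_j^t\|^2 \le 0$ from optimality of \eqref{xjkl}, apply the descent bound \eqref{smooth} blockwise between $\x^t(j)$ and $\x^t(j+1)$, and telescope. The only cosmetic difference is that the paper obtains the optimality inequality via the linear reformulation \eqref{clf} (hence implicitly using \eqref{x1x}), whereas you read it off directly from the quadratic subproblem---your observation that the binary identity is unnecessary here is correct, and your explicit telescoping is exactly what the paper's one-line appeal to Proposition~\ref{lipschitz} in \eqref{l2e2} is hiding.
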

\begin{proof}
Since $$\x_j^{t+1} =\mathcal{T}_{\X_j}\left(\tau g_j(\x^t)^{\top}\x_j+\frac{\textbf{1}}{2}-\x_j^t\right)$$ and $\x_j^t \in \X_j$  for all $j \in \{1,2,...,p\}$, we have
$$\left(\tau g_j(\x^t)+\frac{\textbf{1}}{2}-\x_j^t\right)^{\!\!\top}\!\x_j^{t+1} \leq \left(\tau g_j(\x^t)+\frac{\textbf{1}}{2}-\x_j^t\right)^{\!\!\top}\!\x_j^{t},$$ which implies that
\begin{equation}\label{l2e1}
  2\tau\langle \x_j^{t+1}-\x_j^t, g_j(\x^t) \rangle+\|\x_j^{t+1}-\x_j^t\|^2 \leq 0.
\end{equation}
Proposition \ref{lipschitz} tells us that
\begin{align}\label{l2e2}
 & L(\x^{t+1},\lambda,\rho)-L(\x^{t},\lambda,\rho)\nonumber\\
  \leq &\sum\nolimits_{j=1}^p\langle \x_j^{t+1}-\x_j^t, g_j(\x^t) \rangle+\frac{\kappa}{2}\sum\nolimits_{j=1}^p\|\x_j^{t+1}-\x_j^t\|^2.
\end{align}
Combining inequalities \eqref{l2e1} and \eqref{l2e2} yields \eqref{lx}. The proof is completed.
\qed
\end{proof}

This lemma tells us that a small step size satisfying $\tau <1/\kappa$ leads to a decrease in the function value $L$ when $\bx^{t+1}\neq\bx^t$. However, the following lemma states that when the step size is too small, the iteration returns the same result. We let $g(\x)$ denote the gradient of $L(\x,\lambda,\rho)$ at $\x$.
\begin{lemma}\label{stuck}
  If the step size satisfies $0 < \tau < \frac{1}{2\|g(\bx^t)\|}$ when $g(\bx^t) \neq \textbf{0}$, then it holds
  that $$\bx^t =\mathcal{T}_{\X}\left(\tau g(\bx^t)+\frac{\bold{1}}{2}-\bx^t\right).$$
\end{lemma}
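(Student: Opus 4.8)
The plan is to unwind Definition \ref{lo} and check directly that $\bx^t$ solves the linear program $\min_{\u\in\X}\v^{\top}\u$ with cost vector $\v:=\tau g(\bx^t)+\tfrac{\1}{2}-\bx^t$. By the definition of the linear operator this is exactly the assertion $\bx^t=\mathcal{T}_{\X}(\v)$, so it suffices to prove $\v^{\top}(\bx^t-\u)\le 0$ for every $\u\in\X$.

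First I would examine $\v$ coordinatewise. Since $\bx^t\in\X\subseteq\{0,1\}^n$, each component $\bx^t_i$ equals $0$ or $1$; moreover $|g_i(\bx^t)|\le\|g(\bx^t)\|$, so the step-size hypothesis $0<\tau<\tfrac{1}{2\|g(\bx^t)\|}$ yields $|\tau g_i(\bx^t)|<\tfrac12$ for every $i$. Hence, if $\bx^t_i=0$ then $v_i=\tau g_i(\bx^t)+\tfrac12>0$, while if $\bx^t_i=1$ then $v_i=\tau g_i(\bx^t)-\tfrac12<0$. In other words, when $\tau$ is this small the offset $\tfrac{\1}{2}-\bx^t$ dominates the linear term $\tau g(\bx^t)$, so the sign pattern of $\v$ records exactly which coordinates of $\bx^t$ are active.

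Next I would combine this with $\u\in\X\subseteq\{0,1\}^n$. For each $i$: if $\bx^t_i=0$ then $\bx^t_i-u_i=-u_i\le0$ and $v_i>0$; if $\bx^t_i=1$ then $\bx^t_i-u_i=1-u_i\ge0$ and $v_i<0$. Either way $v_i(\bx^t_i-u_i)\le 0$, and summing over $i$ gives $\v^{\top}(\bx^t-\u)\le0$. Therefore $\bx^t$ minimizes $\v^{\top}\u$ over $\X$, i.e.\ $\bx^t=\mathcal{T}_{\X}\!\left(\tau g(\bx^t)+\tfrac{\1}{2}-\bx^t\right)$; one in fact sees the minimizer is unique, since every term is strictly negative whenever $u_i\neq\bx^t_i$.

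There is essentially no hard step here: this lemma is the ``stuck'' counterpart of the descent estimate in Lemma \ref{tauka}, and the only thing to watch is the elementary bound $|\tau g_i(\bx^t)|<\tfrac12$, which is what makes the constant $\tfrac12$ offset decisive. The one subtlety worth stating explicitly is that we use $\u\in\{0,1\}^n$ (not merely $\u\ge 0$) to make both sign cases work, and that the equality in the conclusion should be read in the sense of Definition \ref{lo}, namely that $\bx^t$ belongs to the $\arg\min$ set.
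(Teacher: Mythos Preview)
Your proof is correct. The paper argues differently: it bounds the inner product globally via Cauchy--Schwarz, writing $2\tau\, g(\bx^t)^{\top}(\bx^t-\x)\le 2\tau\|g(\bx^t)\|\,\|\bx^t-\x\|<\|\bx^t-\x\|\le\|\bx^t-\x\|^2$ for any $\x\in\X$ with $\x\neq\bx^t$, and then uses the binary identity $(\tfrac{\1}{2}-\bx^t)^{\top}(\bx^t-\x)=-\tfrac12\|\bx^t-\x\|^2$ to conclude $\v^{\top}\bx^t\le\v^{\top}\x$. Your coordinatewise argument instead exploits $|g_i(\bx^t)|\le\|g(\bx^t)\|$ to pin down the sign of each $v_i$ directly. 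Both routes ultimately rest on the same two ingredients (the step-size bound and the $\{0,1\}$ structure), but yours is slightly more elementary---no Cauchy--Schwarz, no algebraic identity for the quadratic term---and it yields strict inequality termwise, so uniqueness of the minimizer is immediate rather than implicit.
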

\begin{proof}
If $0 < \tau < \frac{1}{2\|g(\x^t)\|}$, for all $\x^t \neq \x \in \X$, we have
\begin{equation*}
  -2\tau g(\x^t)^{\top}(\x^t-\x) \leq 2\tau \|g(\x^t)\|\|\x^t-\x\|\leq \|\x^t-\x\|< \|\x^t-\x\|^2,
\end{equation*}
where the last inequality holds due to $\x^t, \x \in \{0,1\}^n$. It yields that
$$\left(\tau g(\x^t)+\frac{\textbf{1}}{2}-\x^t\right)^{\top}\x^t \leq \left(\tau g(\x^t)+\frac{\textbf{1}}{2}-\x^t\right)^{\top}\x.$$
By the definition of the operator $\mathcal{T}_{\X}(\cdot)$, we arrive at the conclusion.
\qed
\end{proof}

Based on Lemma \ref{stuck}, the implementation of the BCD method heavily relies on the choice of step size $\tau$. Hence, we give a definition of a $\tau$-stationary point.
\begin{definition}
  For the AL relaxation problem \eqref{df}, if a point $\bx^*$ satisfies $$\bx^* =\mathcal{T}_{\X}\left(\tau g(\bx^*)+\frac{\textbf{1}}{2}-\bx^*\right)$$ with $\tau > 0$, then it is called a $\tau$-stationary point.
\end{definition}

For the augmented Lagrangian relaxation problem \eqref{df}, given parameters $\lambda$ and $\rho$, we say that $\x^*$ is a $\delta$-local minimizer if there is an integer $\delta>0$ such that
$$L(\x,\lambda,\rho) \geq L(\x^*,\lambda,\rho),~~\text{for~all}~ \x\in \mathcal{N}(\x^*,\delta) \cap \X.$$
Note that a $n$-local minimizer is a global minimizer due to the fact $\|\x-\x^*\| \leq n$ for all $\x,\x^* \in \X$.
The following important result reveals a relationship between a $\tau$-stationary point and a global minimizer of the problem \eqref{df}.
\begin{theorem}\label{sta_local}
  We have the following relationships between the $\tau$-stationary point and the local minimizer of the problem \eqref{df}.
  \begin{itemize}
    \item[(i)] If $\bx^*$ is a local minimizer, then $\bx^*$ is a $\tau$-stationary point for any step size $0 < \tau <1/\kappa$.
    \item[(ii)] If $\bx^*$ is a $\tau$-stationary point with $\tau >\delta/2$, then  $\bx^*$ is a $\delta$-local minimizer of the problem \eqref{df} under the assumption that the entries in $\A,\b,\c,\lambda$ and $\rho$ are integral.
  \end{itemize}
\end{theorem}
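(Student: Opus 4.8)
The plan is to treat the two parts separately. Part (i) is essentially a restatement of what we already know: by Proposition \ref{lipschitz}, the gradient $g$ of $L(\cdot,\lambda,\rho)$ is $\kappa$-Lipschitz with $\kappa=\rho\|\A\|_2^2$, and we have the descent inequality \eqref{smooth}. If $\bx^*$ is a local minimizer but \emph{not} a $\tau$-stationary point for some $0<\tau<1/\kappa$, then the point $\tilde{\bx}=\mathcal{T}_{\X}\!\left(\tau g(\bx^*)+\tfrac{\1}{2}-\bx^*\right)$ differs from $\bx^*$, and by the same computation as in Lemma \ref{tauka} (using the binary identity $\|\x\|^2=\1^\top\x$ to linearize the prox term, then the defining inequality of $\mathcal{T}_{\X}$, then \eqref{smooth}) we get $L(\tilde{\bx},\lambda,\rho)\le L(\bx^*,\lambda,\rho)-(\tfrac{1}{2\tau}-\tfrac{\kappa}{2})\|\tilde{\bx}-\bx^*\|^2<L(\bx^*,\lambda,\rho)$. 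But $\tilde{\bx}$ need not be close to $\bx^*$, so to contradict local minimality I would instead run one BCD sweep of the prox-linear update \eqref{xjkl} starting from $\bx^*$: each single block move changes only one block, so the resulting iterate lies in $\mathcal{N}(\bx^*,\delta)$ for a suitable $\delta$, and Lemma \ref{tauka} (applied blockwise) shows the objective strictly decreases unless nothing moves — and "nothing moves" for every block is exactly $\tau$-stationarity. This contradicts $\bx^*$ being a local minimizer, proving (i).

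For part (ii), suppose $\bx^*$ is a $\tau$-stationary point with $\tau>\delta/2$ and all data $\A,\b,\c,\lambda,\rho$ are integral. Take any $\x\in\mathcal{N}(\bx^*,\delta)\cap\X$, so $\|\x-\bx^*\|^2\le\delta$. The defining inequality of the $\tau$-stationary point, $\bx^*=\mathcal{T}_{\X}\!\left(\tau g(\bx^*)+\tfrac{\1}{2}-\bx^*\right)$, gives
\[
\left(\tau g(\bx^*)+\tfrac{\1}{2}-\bx^*\right)^{\!\top}\bx^* \le \left(\tau g(\bx^*)+\tfrac{\1}{2}-\bx^*\right)^{\!\top}\x,
\]
which after rearranging and using $\|\x\|^2=\1^\top\x$, $\|\bx^*\|^2=\1^\top\bx^*$ yields $\tau\langle g(\bx^*),\x-\bx^*\rangle\ge -\tfrac{1}{2}\|\x-\bx^*\|^2$, i.e. $\langle g(\bx^*),\x-\bx^*\rangle\ge-\tfrac{1}{2\tau}\|\x-\bx^*\|^2>-\tfrac{1}{\delta}\|\x-\bx^*\|^2\ge -1$. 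Now here is the key point: because all the data are integral and $\x,\bx^*$ are integer vectors, the quantity $L(\x,\lambda,\rho)-L(\bx^*,\lambda,\rho)$ is... not obviously an integer by itself, but I would combine the convexity bound from below. By convexity of $L$ in $\x$ (it is a convex quadratic composed with the coordinatewise $(\cdot)_+$, hence convex), $L(\x,\lambda,\rho)\ge L(\bx^*,\lambda,\rho)+\langle g(\bx^*),\x-\bx^*\rangle > L(\bx^*,\lambda,\rho)-1$. So $L(\x,\lambda,\rho)-L(\bx^*,\lambda,\rho)>-1$. The final step is to argue that under the integrality assumption this difference, being strictly greater than $-1$, must in fact be $\ge 0$: evaluated at integer points with integer data, $L$ takes values in $\tfrac12\mathbb{Z}$ at worst (the only non-integer source is the $\tfrac{\rho}{2}\|\cdot\|^2$ term, but $\|(\A\x-\b)_+\|^2$ is a sum of squares of integers, hence an integer, so with $\rho$ integral $L(\x,\lambda,\rho)\in\tfrac12\mathbb{Z}$, and in fact $\rho\cdot(\text{integer})/2$; more carefully all three summands are integers or half-integers, so $L$ is a half-integer). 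A half-integer strictly greater than $-1$ is $\ge -\tfrac12$; to close the gap to $0$ I would sharpen the strict inequality: since $\|\x-\bx^*\|^2\le\delta$ and $\tau>\delta/2$ we actually get $\langle g(\bx^*),\x-\bx^*\rangle\ge-\tfrac{\delta}{2\tau}>-1$, and pushing through the convexity bound together with the half-integrality of $L$ forces $L(\x,\lambda,\rho)\ge L(\bx^*,\lambda,\rho)$, which is the $\delta$-local minimality.

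The main obstacle I anticipate is precisely this last integrality/rounding argument in part (ii): making rigorous the claim that "$L(\x,\lambda,\rho)-L(\bx^*,\lambda,\rho)>-1$ together with the granularity of $L$ on integer points implies $\ge 0$." One has to pin down exactly which lattice $L$ lands in (it depends on whether $\rho$ is even or odd, and on $\lambda$), and correspondingly calibrate the condition $\tau>\delta/2$ so that the strict lower bound $-\tfrac{\delta}{2\tau}$ on the gradient inner product beats the lattice spacing. A secondary, milder obstacle in part (i) is confirming that the one-sweep BCD argument indeed keeps the iterate within the neighborhood $\mathcal{N}(\bx^*,\delta)$ for the relevant $\delta$ — since each of the $p$ blocks may move, the displacement could be as large as $\sqrt{n}$; the clean statement is really for $\tau$-stationarity defined with the \emph{full} operator $\mathcal{T}_{\X}$ as in the definition, so I would check that a single full-space prox step stays close enough, or alternatively invoke that any local minimizer is in particular blockwise optimal and use Lemma \ref{tauka} blockwise.
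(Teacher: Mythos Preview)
Your argument for part (ii) has a genuine gap at exactly the place you flagged. You bound $L(\x,\lambda,\rho)-L(\bx^*,\lambda,\rho)>-1$ and then try to exploit the granularity of the \emph{function values} $L(\cdot,\lambda,\rho)$; this only lands you in $\tfrac12\mathbb{Z}$ and cannot be pushed to $\ge 0$. The paper applies the integrality hypothesis one step earlier, to the \emph{gradient}: since $g(\bx^*)=\c+\A^\top\lambda+\rho\,\A^\top(\A\bx^*-\b)_+$ and all of $\A,\b,\c,\lambda,\rho,\bx^*$ are integral, each coordinate $g_l(\bx^*)$ is an integer. Hence the inner product $\langle g(\bx^*),\x-\bx^*\rangle=\sum_{l:\,x_l=1,x^*_l=0}g_l(\bx^*)-\sum_{l:\,x_l=0,x^*_l=1}g_l(\bx^*)$ is itself an integer, and your (correct) strict bound $\langle g(\bx^*),\x-\bx^*\rangle>-\tfrac{\delta}{2\tau}>-1$ forces it to be $\ge 0$. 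Convexity of $L$ then gives $L(\x,\lambda,\rho)-L(\bx^*,\lambda,\rho)\ge\langle g(\bx^*),\x-\bx^*\rangle\ge 0$ directly, with no half-integer issue.

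For part (i) you are working too hard, and your proposed fix does not close. The definition of $\tau$-stationarity uses the full operator $\mathcal{T}_{\X}$, not a blockwise one, so ``nothing moves in any single block'' is not the same as $\bx^*=\mathcal{T}_{\X}(\tau g(\bx^*)+\tfrac{\1}{2}-\bx^*)$. The paper's route avoids contradiction entirely: for any $\x\in\X$, combine $L(\bx^*,\lambda,\rho)\le L(\x,\lambda,\rho)$ with the descent inequality \eqref{smooth} to obtain $\langle g(\bx^*),\x-\bx^*\rangle\ge -\tfrac{\kappa}{2}\|\x-\bx^*\|^2\ge -\tfrac{1}{2\tau}\|\x-\bx^*\|^2$. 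After the binary identity $\|\cdot\|^2=\1^\top(\cdot)$ this is exactly the defining inequality of $\bx^*=\mathcal{T}_{\X}(\tau g(\bx^*)+\tfrac{\1}{2}-\bx^*)$, so no neighborhood bookkeeping or BCD sweep is needed.
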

\begin{proof}
(i) If $\x^*$ is a local minimizer, then for any $\x \in \X$ we have
\begin{align*}
  L(\x^*,\lambda,\rho) \leq & L(\x,\lambda,\rho)\\
  \leq& L(\x^*, \lambda, \rho)+\langle\nabla L(\x^*, \lambda, \rho), \x-\x^* \rangle +\frac{\kappa}{2}\|\x-\x^*\|^2.
\end{align*}
Therefore, $$\langle\nabla L(\x^*, \lambda, \rho), \x-\x^* \rangle \geq -\frac{\kappa}{2}\|\x-\x^*\|^2\geq -\frac{1}{2\tau}\|\x-\x^*\|^2,$$
which implies that $\x^*$ is a $\tau$-stationary point.

(ii) Since $\x^*$ is a stationary point, then for any $\x  \in \X$ we have
\begin{equation}\label{stax}
  \left(\tau g(\x^*)+\frac{\textbf{1}}{2}-\x^*\right)^{\top}\x^* \leq \left(\tau g(\x^*)+\frac{\textbf{1}}{2}-\x^*\right)^{\top}\x.
\end{equation}
For any $\x \in \X \cap \mathcal{N}(\x^*,\delta)$, we define the  index sets $\mathbb{J}:=\{l \in \N_n: x^*_l=0, x_l=1\}$ and $\bar{\mathbb{J}}:=\{l \in \N_n: x^*_l=1, x_l=0\}$.  Then
\begin{eqnarray*}
  &&\left(\tau g(\x^*)+\frac{\textbf{1}}{2}-\x^*\right)^{\top}\left(\x-\x^*\right)\\
  &=&\sum\nolimits_{l \in \mathbb{J}}\left(\tau g_l(\x^*)+\frac{1}{2}\right)-\sum\nolimits_{l \in \bar{\mathbb{J}}}\left(\tau g_l(\x^*)-\frac{1}{2}\right)\\
  &\leq&\tau\left(\sum\nolimits_{l \in \mathbb{J}} g_l(\x^*)-\sum\nolimits_{l \in \bar{\mathbb{J}}}g_l(\x^*)\right)+\frac{\delta}{2},
\end{eqnarray*}
which together with \eqref{stax} and $\tau>\frac{\delta}{2}$ yields that
\begin{eqnarray*}\label{ggra}
  \sum\nolimits_{l \in \mathbb{J}} g_l(\x^*)-\sum\nolimits_{l \in \bar{\mathbb{J}}}g_l(\x^*) \geq -\frac{\delta}{2\tau} >-1.
\end{eqnarray*}
Since the entries in $\A,\b,\c,\lambda$ and $\rho$ are integral, then $g_l(\x^*)$ is an integer for every $l \in \N_n$.
This implies that
\begin{eqnarray*}\label{ggra}
  \sum\nolimits_{l \in \mathbb{J}} g_l(\x^*)-\sum\nolimits_{l \in \bar{\mathbb{J}}}g_l(\x^*) \geq 0.
\end{eqnarray*}
Then for any $\x \in \X \cap \mathcal{N}(\x^*,\delta)$, it follows from the convexity of $L$ that
\begin{eqnarray*}
 && L(\x,\lambda,\rho)-L(\x^*,\lambda,\rho)\\
  &\geq & \langle \nabla L(\x^*,\lambda,\rho), \x-\x^*  \rangle\\
  &=& \sum\nolimits_{l \in \mathbb{J}}g_l(\x^*)(x_l-x_l^*)+\sum\nolimits_{l \in \bar{\mathbb{J}}}g_l(\x^*)(x_l-x_l^*) \\&=&\sum\nolimits_{l \in \mathbb{J}} g_l(\x^*)-\sum\nolimits_{l \in \bar{\mathbb{J}}}g_l(\x^*) \geq 0.
\end{eqnarray*}
Therefore, $\x^*$ is a $\delta$-local minimizer of the problem \eqref{df}.
\qed
\end{proof}

We can observe from Theorem \ref{sta_local} that every blockwise optimal solution of \eqref{df} is a $\tau$-stationary point. Conversely, if the entries in $\A,\b,\c,\lambda$ and $\rho$ are integral and $\tau >\frac{1}{2}\max_j\{n_j\}$, then every  $\tau$-stationary point of \eqref{df} is blockwise optimal. These results on relationships between $\tau$-stationary point, blockwise optimal solution and local (global) minimizer are summarized in Figure \ref{relationship} intuitively.

\begin{figure*}[t]
  \usetikzlibrary{math,calc,intersections,through,angles,arrows.meta,shapes.geometric,shadows,quotes,spy,datavisualization,datavisualization.formats.functions,plotmarks}
  \tikzset{every picture/.style={samples=300,smooth,line join=round,thick,>=stealth}}
  \newcommand{\px}{/\!/}
  \centering
  \begin{tikzpicture}[thick,font=\small,scale=0.8]
    \tikzmath{\abx=-0.2;}
    \coordinate(A)at(0,0);
    \coordinate(B)at(10,0);
    \coordinate(Ab)at(0,\abx);
    \coordinate(Bb)at(10,\abx);
    \coordinate[label=below:{{ $\delta$-local minimizer}}](XX)at(5,-2.5);
    \draw[-stealth]($(A)+(0,0.1)$)node[left]{ $\tau$-stationary point}--node[above,pos=0.55]{
    Condition (a), $\tau >\frac{1}{2}\max_j\{n_j\}$}
    ($(B)+(0,0.1)$)node[right]{{blockwise optimal solution}};
    \draw[-stealth] ($(B)+(0,-0.1)$)--node[below=-0.2cm]{}($(A)+(0,-0.1)$);

    \draw[-stealth]($(A)-(0.6,0.5)$)--node[above,sloped,pos=0.55]{Condition (a), $\tau >\delta/2$}
    ($(XX)-(0.2,-0.1)$);
    \draw[-stealth]($(XX)-(0.5,0)$)--node[below,sloped,pos=0.55]{$0 < \tau <1/\kappa$}
    ($(A)-(0.7,0.7)$);

    \draw[-stealth]($(B)+(1,-0.5)$)--node[above,sloped,pos=0.55]{Assumption \ref{Ab1}}
    ($(XX)-(-0.2,-0.1)$);
    \draw[-stealth]($(XX)-(-0.5,0)$)--node[below,sloped,pos=0.55]{$\delta=n$}
    ($(B)+(1.1,-0.7)$);
  \end{tikzpicture}
  \caption{The relationships among $\tau$-stationary point, blockwise optimal solution and local minimizer. Condition(a): the entries in $\A,\b,\c,\lambda$ and $\rho$ are integral.}\label{relationship}
\end{figure*}
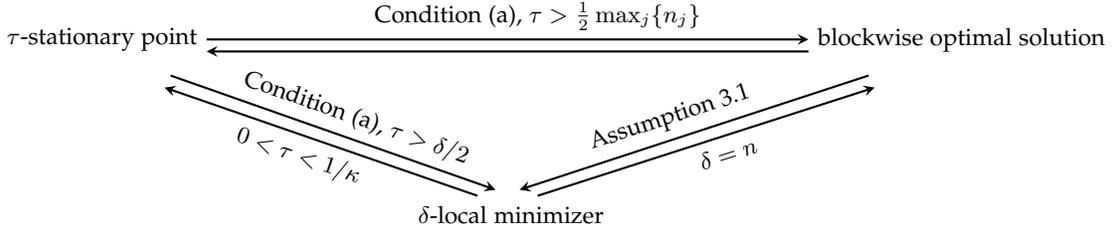

We finally present the main result on the convergence of the BCD method for solving \eqref{xjkl}.
\begin{theorem}\label{cp}
  (Convergence properties) Let $\{\bx^t\}_{t \in \N}$
  be a sequence generated by \eqref{xjkl}. If the step size  satisfies $0 < \tau <\frac{1}{2\kappa}$, then we have
  \begin{itemize}
    \item[(i)] The sequence $\{L(\bx^t,\lambda,\rho)\}_{t \in \N}$ is nonincreasing and
      \begin{equation}\label{des}
        \frac{\kappa}{2}\|\bx^{t+1}-\bx^t\|^2 \leq L(\bx^t, \lambda,\rho)-L(\bx^{t+1},\lambda,\rho).
      \end{equation}
    \item[(ii)]  The sequence $\{\bx^t\}$ converges to a $\tau$-stationary  point after at most $\lceil  \frac{2C\sqrt{n} +\kappa n}{\kappa} \rceil$ iterations, where $C=\max_{\bx \in \X}\| \nabla L(\bx, \lambda, \rho)\|$.
  \end{itemize}
\end{theorem}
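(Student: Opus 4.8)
The plan is to derive everything from the one‑step descent estimate of Lemma~\ref{tauka} together with the discreteness of $\X\subseteq\{0,1\}^n$ and the boundedness of $\nabla L$ on $\X$. Part (i) is immediate: the hypothesis $0<\tau<\frac{1}{2\kappa}$ gives $\frac{1}{2\tau}-\frac{\kappa}{2}>\kappa-\frac{\kappa}{2}=\frac{\kappa}{2}$, so the inequality \eqref{lx} of Lemma~\ref{tauka} yields \eqref{des}; since the left‑hand side of \eqref{des} is nonnegative, the sequence $\{L(\bx^t,\lambda,\rho)\}$ is nonincreasing.

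For part (ii) I would first observe that, since $\bx^{t+1},\bx^t\in\{0,1\}^n$, whenever $\bx^{t+1}\neq\bx^t$ at least one coordinate flips, hence $\|\bx^{t+1}-\bx^t\|^2\geq 1$; combined with \eqref{des} this shows that every iteration at which the iterate actually changes decreases $L$ by at least $\kappa/2$. Next I would bound the total available decrease: applying the descent inequality \eqref{smooth} with $\bx=\bx^t$ and $\bar\bx=\bx^0$, and using $\bx^t\in\X$, $\|\bx^0-\bx^t\|\leq\sqrt n$, and $\|\nabla L(\bx^t,\lambda,\rho)\|\leq C$, gives $L(\bx^0,\lambda,\rho)-L(\bx^t,\lambda,\rho)\leq C\sqrt n+\frac{\kappa n}{2}$ for every $t$. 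If more than $\frac{2C\sqrt n+\kappa n}{\kappa}$ among the iterations $0,\dots,t-1$ were strictly decreasing, the accumulated decrease would exceed $C\sqrt n+\frac{\kappa n}{2}$, a contradiction; therefore at most $\lceil\frac{2C\sqrt n+\kappa n}{\kappa}\rceil$ iterations change the iterate, and some index $t$ with $\bx^{t+1}=\bx^t$ is reached within that many iterations. Set $\bx^*:=\bx^t$.

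It then remains to verify that $\bx^*$ is a $\tau$-stationary point of \eqref{df}. When $\bx^{t+1}=\bx^t$, a short induction on $j$ shows $\x^t(j)=\bx^*$ for every $j\in\N_p$, because the blocks are updated sequentially and none of them changes during the sweep; consequently $g_j(\x^t)=\nabla_{\x_j}L(\bx^*,\lambda,\rho)$, and the block update \eqref{clf} reduces to $\x_j^*=\mathcal{T}_{\X_j}\!\big(\tau g_j(\x^t)+\tfrac{\1}{2}-\x_j^*\big)$ for every $j$. Since $\X=\X_1\times\cdots\times\X_p$ is a product set, the linear operator $\mathcal{T}_{\X}$ of Definition~\ref{lo} decouples across blocks, so stacking these block identities gives $\bx^*=\mathcal{T}_{\X}\!\big(\tau g(\bx^*)+\tfrac{\1}{2}-\bx^*\big)$, i.e.\ $\bx^*$ is a $\tau$-stationary point.

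The routine parts — part (i) and the counting estimate — follow directly from the results already proved. The step I expect to require the most care is the last one: confirming that a coordinate‑cyclic fixed point of the BCD sweep satisfies the \emph{global} $\tau$-stationarity identity. This relies on the separability of the linear operator $\mathcal{T}_{\X}$ over the block product structure and on the bookkeeping that $\x^t(j)$ collapses to $\bx^t$ once a full sweep leaves the iterate unchanged.
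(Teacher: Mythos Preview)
Your proof is correct and, for part (i) and the iteration-count estimate in part (ii), essentially identical to the paper's: both use Lemma~\ref{tauka} for the descent inequality, the bound $\|\bx^{t+1}-\bx^t\|^2\ge 1$ coming from $\X\subseteq\{0,1\}^n$, and the Lipschitz estimate \eqref{smooth} to cap $L(\bx^0,\lambda,\rho)-L(\bx^t,\lambda,\rho)$ by $C\sqrt{n}+\tfrac{\kappa n}{2}$, yielding the same ceiling bound on the number of strictly decreasing steps.

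The genuine difference is in how the limit is shown to be a $\tau$-stationary point. The paper proceeds indirectly: it recasts \eqref{df} as $\min\, L(\cdot,\lambda,\rho)+\delta_{\X}$, verifies the Kurdyka--\L{}ojasiewicz property (via semi-algebraicity of $\X$) together with block-Lipschitz bounds, and then invokes \cite[Theorem~1]{bolte2014proximal} to conclude convergence to a stationary point; only afterwards does it derive the iteration count relative to that limit $\bx^*$. You bypass the KL machinery entirely: once the counting argument yields an index with $\bx^{t+1}=\bx^t$, you check directly that a fixed point of the cyclic sweep satisfies the global identity $\bx^*=\mathcal{T}_{\X}(\tau g(\bx^*)+\tfrac{\1}{2}-\bx^*)$, using that $\x^t(j)$ collapses to $\bx^*$ when no block moves and that $\mathcal{T}_{\X}$ factorizes over the product $\X=\X_1\times\cdots\times\X_p$. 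Your route is more elementary and fully self-contained for this discrete setting, whereas the paper's KL argument is heavier but would transfer to continuous feasible sets where finite termination is unavailable.
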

\begin{proof}
(i) It is obvious from \eqref{lx} that if $0 < \tau <\frac{1}{2\kappa}$, then \eqref{des} holds.

(ii) Given the parameters $\lambda \in \R^m_+$ and $\rho>0$, we can observe that the model \eqref{df} can be equivalently written as
\begin{equation*}\label{dg_c}
  \min  F(\x):= L(\x,\lambda,\rho) +\delta_{\X}(\x),
\end{equation*}
where $\delta_{\X}(\x)$ is the indicator function of the set $\X$, i.e., $\delta_{\X}(\x)=
  +\infty,  \text{if}~ \x \in \X$ and $0$ otherwise. To verify that the sequence $\{\bx^t\}$ converges to a $\tau$-stationary  point, we examine that the conditions in \cite[Theorem 1]{bolte2014proximal} hold.

Firstly, since the set $\X=\{\x \in \{0,1\}^n:\B\x \leq \d\}$ is semi-algebraic, then $\delta_{\X}(\x)$ is a Kurdyka-\L{}ojasiewicz  (KL) function \cite{attouch2013convergence,bolte2014proximal}. It is obvious to see that $L(\x,\lambda,\rho)$ is also a KL function, and hence the function $F(\x)$ satisfies the KL property, which is a crucial condition ensuring convergence.

Secondly, the Lipschitz constant $\kappa>0$ in Proposition \ref{lipschitz} is bounded if $\rho$ has an upper bound, and the problem \eqref{df} is inf-bounded.

Thirdly, for any $\x, \bar{\x}\in \X$ with $\bar{\x}=(\x_1,...,\bar{\x}_j,...,\x_p)$,
\begin{eqnarray*}
  &&\|\nabla_{\x_j} L(\x,\lambda,\rho)-\nabla_{\x_j} L(\bar{\x},\lambda,\rho)\|\\
  &=&\left\|\rho\A_j^{\top}(\A\x-\b)_+-\rho\A_j^{\top}(\A\bar{\x}-\b)_+\right\|\\
  &\leq&\rho\|\A_j \|_2\left\|(\A\x-\b)_+ -(\A\bar{\x}-\b)_+\right\|\\
  &\leq&\rho\|\A_j \|_2\left\|\A\x-\A\bar{\x}\right\| \leq \rho\|\A_j \|^2_2\left\|\x_j-\bar{\x}_j\right\|.
\end{eqnarray*}
Therefore, the result follows from \cite[Theorem 1]{bolte2014proximal}.

Suppose $\x^*$ is a $\tau$-stationary point, then for every $\x \in\X$, we have
\begin{align}\label{t1e1}
  &L(\x, \lambda, \rho)-L(\x^*, \lambda, \rho)\nonumber\\
  \leq& \langle \nabla L(\x^*, \lambda, \rho), \x-\x^* \rangle +\frac{\kappa}{2}\|\x-\x^*\|^2 \nonumber\\
  \leq& C\|\x-\x^*\| +\frac{\kappa}{2}\|\x-\x^*\|^2\leq C\sqrt{n} +\frac{\kappa n}{2}.
\end{align}
If each iteration always finds a new point until arriving at $\x^*$, we get
\begin{align}\label{t1e2}
  \frac{\kappa T}{2}&\leq \sum\nolimits_{t=0}^T \left(L(\bx^t, \lambda,\rho)-L(\x^{t+1},\lambda,\rho)\right)
 \nonumber  \\
  &\leq L(\x^0, \lambda, \rho)-L(\x^*, \lambda, \rho),
\end{align}
where the first inequality holds due to the conclusion in (i) and the fact $\|\x^{t+1}-\x^t\| \geq 1$. Thus combining \eqref{t1e1} with \eqref{t1e2} yields that
$T \leq \lceil \frac{2C\sqrt{n} +\kappa n}{\kappa} \rceil$. It implies that we
only need at most $\lceil \frac{2C\sqrt{n} +\kappa n}{\kappa} \rceil$ steps to arrive at a $\tau$-stationary point.
\qed
\end{proof}

If each iteration of the BCD method always finds a new point, then according to Theorem \ref{cp} (i), we have $\sum_{t=0}^{\infty}\|\bx^{t+1}-\bx^t\|^2 < +\infty$. By Theorem 1 in \cite{bolte2014proximal}, we can conclude that
$\lim_{t \rightarrow \infty}  \x^t = \x^*,$ where $\x^*$ is a limit point of the sequence $\{\x^t\}_{t \in \N}$.  If we choose an initial point that is already close to an optimal solution and an appropriate step size $\tau$, based on specific convergence criteria detailed in Theorem \ref{cp}, the BCD method can find a global solution of problem \eqref{df}.

\subsection{Convergence of ALM}
Assume the BCD method returns a global minimizer $\x^*$  to the augmented Lagrangian relaxation problem (\ref{df}) in each inner loop. In this subsection, we focus on the convergence property of the projected subgradient method for solving the dual problem \eqref{zdp}. 
%
%
%
For convenience, we introduce the following constants for subsequent analysis:
\begin{eqnarray*}
  S &:=& {\text{arg}\max}_{\lambda \in \R_+^m,\rho>0}~ d(\lambda,\rho),\\
  \theta &:=& {\min}_{(\lambda,\rho) \in S}~ \|\lambda^0-\lambda\|^2 +(\rho^0-\rho)^2.
\end{eqnarray*}
Let  $d_g^k$ denote the subgradient of $d(\lambda^k,\rho^k)$.
We estimate the distance between the dual function value in each iteration and the optimal value of the problem \eqref{bLP} in the following theorem.
\begin{theorem}\label{alm_converge}
  If we take the step size  $\alpha^k=\frac{\beta_k}{\|d_g^k\|}$ with $\beta_k=\sqrt{\frac{\theta}{K}}$, then,
  $$f^{\text{IP}}-\max_{k \in \{1,2,...,K\}} d(\lambda^k,\rho^k) \leq \frac{\zeta}{2}\sqrt{\frac{5\theta}{K}},$$
  where $\zeta= {\max}_{\bx \in \X}~ \|\A\bx-\b\|^4$. Furthermore, if
  the positive sequence $\{\beta_k\}_{k\in \N}$ is bounded and $\sum_{k \in \N}\beta_k^2 < +\infty.$ Then $(\lambda^k, \rho^k)$ converges to some $(\lambda^*, \rho^*) \in S$.
\end{theorem}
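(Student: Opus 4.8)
The plan is to treat the ALM iteration \eqref{lam}--\eqref{rho} as a standard projected subgradient ascent on the concave dual function $d(\lambda,\rho)$ over the convex set $\R_+^m\times\R_{++}$, and to bound the suboptimality using the classical telescoping argument. First I would record from Proposition \ref{subgra} that the vector $d_g^k:=(\A\x^{k+1}-\b;\ \tfrac12\|(\A\x^{k+1}-\b)_+\|^2)$ is a (super)gradient of $d$ at $(\lambda^k,\rho^k)$, and that its squared norm is bounded by $\|\A\x^{k+1}-\b\|^2+\tfrac14\|(\A\x^{k+1}-\b)_+\|^4\le \zeta^{1/2}+\tfrac14\zeta\le\tfrac54\zeta$ on $\X$ (using $\|\A\x-\b\|\ge$ relevant normalization, or more simply bounding each term by $\zeta$ and $\zeta^{1/2}\le$ a constant times $\zeta$ when $\zeta\ge1$; in any case one gets $\|d_g^k\|^2\le\tfrac54\zeta$, which is where the $5$ under the square root comes from). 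Since the projections in \eqref{lam}--\eqref{rho} are onto a convex set containing every maximizer in $S$, projection is nonexpansive, so for any $(\lambda,\rho)\in S$,
\begin{align*}
\|\lambda^{k+1}-\lambda\|^2+(\rho^{k+1}-\rho)^2
&\le \|\lambda^{k}-\lambda\|^2+(\rho^{k}-\rho)^2\\
&\quad -2\alpha^k\,(d_g^k)^\top\!\big((\lambda^k;\rho^k)-(\lambda;\rho)\big)
+(\alpha^k)^2\|d_g^k\|^2.
\end{align*}

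Next I would use concavity of $d$ to replace the inner-product term: $(d_g^k)^\top((\lambda;\rho)-(\lambda^k;\rho^k))\le d(\lambda,\rho)-d(\lambda^k,\rho^k)=f^{\text{IP}}-d(\lambda^k,\rho^k)$, where the last equality invokes the strong duality Lemma \ref{strongd} (so the optimal dual value equals $f^{\text{IP}}$ for $(\lambda,\rho)\in S$, assuming $\rho$ is large enough; if $S$ is taken as the argmax over all $\rho>0$ this is fine). Substituting the step size $\alpha^k=\beta_k/\|d_g^k\|$ and telescoping from $k=1$ to $K$ gives
\[
2\sum_{k=1}^K \frac{\beta_k}{\|d_g^k\|}\big(f^{\text{IP}}-d(\lambda^k,\rho^k)\big)
\le \theta + \sum_{k=1}^K \beta_k^2.
\]
Lower-bounding each $f^{\text{IP}}-d(\lambda^k,\rho^k)$ on the left by $\min_k(f^{\text{IP}}-d(\lambda^k,\rho^k)) = f^{\text{IP}}-\max_k d(\lambda^k,\rho^k)\ge 0$, and using $\|d_g^k\|\le\sqrt{5\zeta}/2\cdot$(appropriate constant) — more carefully, dividing and using $1/\|d_g^k\|\ge 2/\sqrt{5\zeta}$ is the wrong direction, so instead I would bound $\sum_k \beta_k/\|d_g^k\| \ge (2/\sqrt{5\zeta})\sum\beta_k$ only after first bounding $\|d_g^k\|$ from above — here one has to be a little careful, and the cleanest route is: divide both sides by $2\sum_k \beta_k/\|d_g^k\|$, bound $\|d_g^k\|^2(\alpha^k)^2=\beta_k^2$ exactly in the error term (which is why the step size is normalized), and bound $\sum\beta_k/\|d_g^k\|\ge \sum\beta_k \cdot 2/(\sqrt5\,\zeta^{1/2})$. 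With the constant choice $\beta_k=\sqrt{\theta/K}$, one gets $\sum\beta_k^2=\theta$ and $\sum\beta_k=\sqrt{\theta K}$, yielding $f^{\text{IP}}-\max_k d(\lambda^k,\rho^k)\le \frac{\sqrt{5\zeta}}{2}\cdot\frac{2\theta}{2\sqrt{\theta K}} = \frac{\sqrt{5\zeta}}{2}\sqrt{\theta/K}$ up to the stated form $\frac{\zeta}{2}\sqrt{5\theta/K}$ (matching the paper's $\zeta=\max\|\A\x-\b\|^4$ convention, where $\|d_g^k\|^2\le \tfrac54\zeta$ becomes $\|d_g^k\|\le\tfrac{\sqrt5}{2}\zeta^{1/2}\le\tfrac{\sqrt5}{2}\zeta$ when $\zeta\ge1$, absorbing the square root).

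For the second claim — convergence of $(\lambda^k,\rho^k)$ to a point of $S$ — I would return to the Fejér-type inequality above with the summability hypothesis $\sum_k\beta_k^2<\infty$: the sequence $\{\|\lambda^k-\lambda\|^2+(\rho^k-\rho)^2\}$ is, up to the summable perturbation $\sum\beta_k^2$ and the nonpositive term $-2\alpha^k(d_g^k)^\top(\cdots)$, quasi-Fejér monotone with respect to $S$. Standard quasi-Fejér theory then gives that this distance sequence converges for every fixed $(\lambda,\rho)\in S$, that $\{(\lambda^k,\rho^k)\}$ is bounded, and that $\sum_k \alpha^k(f^{\text{IP}}-d(\lambda^k,\rho^k))<\infty$; combined with boundedness of $\{\beta_k\}$ and $\|d_g^k\|$ bounded below on the (bounded) trajectory — one needs $\|d_g^k\|$ bounded above, giving $\alpha^k\ge c\beta_k$, but $\sum\beta_k$ may diverge, forcing $\liminf d(\lambda^k,\rho^k)=f^{\text{IP}}$ along a subsequence, hence some cluster point lies in $S$ — and finally the convergence of the distance sequence to that particular cluster point pins down the full limit. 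The main obstacle I anticipate is precisely this last step: ensuring the step-size normalization keeps $\alpha^k$ from collapsing to zero (i.e. that $\|d_g^k\|$ stays bounded, which holds because $\{(\lambda^k,\rho^k)\}$ is bounded and $\X$ is finite) while simultaneously extracting a subsequence along which the dual value approaches optimum, and then upgrading subsequential to full convergence via the monotone distance; the rate estimate itself is routine once the supergradient norm bound and strong duality are in hand.
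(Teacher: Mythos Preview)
Your proposal is correct and follows essentially the same route as the paper. The only difference is presentational: the paper outsources the telescoping inequality
\[
f^{\text{IP}}-\max_{k\le K} d(\lambda^k,\rho^k)\le \frac{\theta+\sum_{k=1}^K(\alpha^k\|d_g^k\|)^2}{2\sum_{k=1}^K\alpha^k}
\]
to a cited lemma (Lemma~3 of \cite{sun2021decomposition}) and the convergence claim to Theorem~7.4 of \cite{2011Nonlinear}, whereas you derive both from first principles via the nonexpansive-projection and quasi-Fej\'er arguments; your observation about $\sqrt{\zeta}$ versus $\zeta$ (absorbed when $\zeta\ge 1$) is exactly the slack the paper silently uses in passing from $\|d_g^k\|^2\le\tfrac54\zeta$ to the stated bound.
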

\begin{proof}
Let  $\{(\lambda^k, \rho^k)\}_{k\in \N}$ be the sequence generated by Algorithm \ref{ALM}. We derive from the Lemma 3 in \cite{sun2021decomposition} that
\begin{equation}\label{dgk}
  f^{\text{IP}}-\max_{k \in \{1,2,...,K\}} d(\lambda^k,\rho^k) \leq \frac{\theta+\sum_{k=1}^K(\alpha^k\|d_g^k\|)^2}{2\sum_{k=1}^K \alpha^k}.\end{equation}
For all $k \in \N$, we have
$$\|d_g^k\|^2=\|\A\x^k-\b\|^2+\frac{1}{4}\|(\A\x^k-\b)_+\|^4 \leq \frac{5}{4}\|\A\x^k-\b\|^4\leq \frac{5}{4} \zeta.$$
Then the right-hand side of \eqref{dgk} satisfies
$$\frac{\theta+\sum_{k=1}^K(\alpha^k\|d_g^k\|)^2}{2\sum_{k=1}^K \alpha^k}=\frac{\sqrt{5}\zeta(\theta+\sum_{k=1}^K\beta_k^2)}{4\sum_{k=1}^K \beta_k}= \frac{\zeta}{2}\sqrt{\frac{5\theta}{K}}.$$
Thus we arrive at the first result.

The second claim is then proved in \cite[Theorem 7.4]{2011Nonlinear}, where the proof for the subgradient method is readily extended to the projected subgradient method.
\qed
\end{proof}

Although the theoretical analysis of ALM-C relies on Assumption 3.1, our tests show that ALM-C is also effective in more general settings. Furthermore, we present the ALM-P method as a versatile alternative not relying on this assumption.



\section{Applications}\label{sec_app}
In this section, we show the performance of the proposed algorithms on two practical problems. One is the train timetabling problem and the other is the vehicle routing problem. To show the performance of compared methods, we define three termination criteria: a maximum number of iterations, a time limit, and an optimality gap based on the difference between the objective value generated by our methods and the best known value.  All instances are tested on a MacBook Pro 2019 with 8GB of memory and Intel Core i5 (Turbo Boost up to 4.1 GHz) with 128 MB eDRAM.

\subsection{Capacitated vehicle routing problem}
We consider the capacitated vehicle routing problem with time windows (CVRPTW). The problem is defined on a complete directed graph $G=(V, E)$, where $V=\{0,1,...,n\}$ is the node set and $E$ is the edge set. Node 0 represents the depot where the vehicles are based, and nodes 1 to $n$ represent the customers that need to be served. Each edge $(s, t)$ in $E$ has an associated travel time $T_{st}$. Each customer $s$ has a demand $c_s$ and a service time window $[a_s, b_s]$. We let $d_{st}$ be the distance from node $s$ to node $t$ and $M$ be a large constant. The objective is to construct a set of least-cost vehicle routes starting and ending at the depot, such that each customer is visited exactly once within their time window, and the total demand of customers served in each route does not exceed vehicle capacity $C$.

To formulate this problem as an integer program, we define the following decision variables: (i) $x_{st}^j$: binary variable equal to 1 if edge $(s,t)$ is used by vehicle $j$, 0 otherwise. (ii) $w_s^j$: continuous variable indicating the start of service time at customer $s$ by vehicle $j$. Then the block structured integer linear programming formulation is:
\begin{subequations}\label{CVRP}
\begin{align}
\text{min} &\sum\nolimits_{j\in \N_p}\sum\nolimits_{(s,t)\in E} d_{st} x_{st}^j \label{cvrp-obj}\\
\text{s.t.} &\sum\nolimits_{j\in \N_p} \sum\nolimits_{t \in V:t\neq s} x_{st}^j = 1, \hspace{1.6cm} s\in V\backslash{0} \label{cvrp-degree}\\
&\sum\nolimits_{t \in V\backslash s} x_{st}^j = \sum\nolimits_{t \in V\backslash s} x_{ts}^j, \hspace{6.5mm} i\in V,~j\in \N_p\label{cvrp-flow}\\
&\sum\nolimits_{t\in V \backslash 0} x_{0t}^j = 1, \hspace{3.3cm} j\in \N_p \label{cvrp-start}\\
&\sum\nolimits_{s\in V}\sum\nolimits_{t \in V\backslash s} c_s x_{st}^j \leq C, \hspace{1.85cm} j\in \N_p \label{cvrp-capacity}\\
&w_s^j + T_{st} - M(1-x_{st}^j) \leq w_t^j, (s,t)\in E, j\in \N_p \label{cvrp-time}\\
&a_s \leq w_s^j \leq b_s, \hspace{2.85cm} s\in V,~j\in \N_p \label{cvrp-tw}\\
&x_{st}^j \in {0,1}, \hspace{2.85cm} (s, t)\in E, ~ j\in \N_p \label{cvrp-binary}
\end{align}
\end{subequations}

The block structure lies in the routing variables $x_{st}^j$ for each vehicle $j$, which are constrained by the flow balance, capacity and time window constraints. By relaxing the coupling constraints \eqref{cvrp-degree}, the problem decomposes into separate routing subproblems per vehicle.

\subsubsection{Parameter Setting}
We let ALM-C and ALM-P denote Algorithm \ref{PBALM} using the updates \eqref{xjko} and \eqref{xjkl} in step 2, respectively. We compare our proposed methods with the Gurobi solver (version 11.0.0) and OR-tools on all the instances from the Solomon dataset \cite{solomon1987algorithms}. Since the ADMM in \cite{yao2019admm} lacks adaptability for all instances, then we do not compare with it. To illustrate the scale of these instances, we present a subset of representative examples in Table \ref{mpnsize_CVRP_TW}. The notations $|F|, |V|$ and $|E|$ represent the number of vehicles, the number of customers and the number of edges. $n$ and $n_w$ denote the number of variables $x$ and $w$, respectively. To evaluate the robustness of the compared methods, we conduct experiments on the C1-type instances with various problem sizes, as detailed in Table \ref{tab:solomon1}. The results of all remaining instances from the Solomon dataset are presented in Table \ref{tab:solomon2}.


 We adopt a ``vehicle and route-based" formulation to model the CVRPTW problem, which is different from the space-time network flow formulation used in \cite{yao2019admm}. Therefore, the subproblem is a route problem with capacity and time window constraints. We utilize the Gurobi solver to solve the subproblem for convenience.  Note that the formulation does not affect Gurobi's performance. To ensure a fair comparison and maximize Gurobi's utilization, we have implemented efficient callback functions based on Danzig's formulation to fine-tune its performance. We set a time limit of 2000 seconds for the compared methods, except for or-tools where the time limit is set to 500 seconds,  and use the symbol ``--" to signify that the solver failed to find a feasible solution within the allotted time limit.

\renewcommand{\arraystretch}{1}
\begin{table}[H]
  \caption{A subset of representative examples of the Solomon datasets}
  \label{mpnsize_CVRP_TW}
    \centering
  \setlength{\tabcolsep}{1.5mm}{\begin{tabular}{cccccccc}
      \toprule
      \multirow{1}{*}{No.} & \multirow{1}{*}{$|F|$} & \multirow{1}{*}{$|V|$} & \multirow{1}{*}{$|E|$} & \multirow{1}{*}{$m$} & \multirow{1}{*}{$q$} & \multirow{1}{*}{$n$} & \multirow{1}{*}{$n_w$}\cr
      \midrule
          R101-50              & 12                     & 50                     & 2,550                  & 50                   & 30,636                & 30,600               & 612 \cr
          R201-50              & 6                    & 50                     & 2,550                  & 50                                 & 15,318      & 15,300        & 306 \cr
           RC101-50             & 8                      & 50                     & 2,550                  & 50                   & 20,424               & 20,400               & 408 \cr
          RC201-50             & 5                      & 50                     & 2,550                  & 50                   & 12,765              & 12,750               & 255 \cr
\
      C101-100             & 10                     & 100                    & 10,100                 & 100                  & 101,030              & 101,000              & 1,010 \cr
       C201-100             & 3                      & 100                    & 10,100                 & 100                  & 30,309               & 30,300               & 303 \cr
      \bottomrule
    \end{tabular}}
\end{table}

\subsubsection{Performances of the Proposed Algorithm} In the subsequent tables,  the ``$f^*$" and ``$f$" columns correspond to the best-known objective values and the objective values of feasible solutions generated by these compared methods, respectively. The ``Time" column denotes the CPU time (in seconds) that the methods taken by the algorithms to meet the stopping criteria. The optimality gap is defined by gap$^1 =|f-f^*|/|f^*|$. Due to the lack of an inherent termination criterion in OR-tools, it runs for the entire limited time and outputs the corresponding feasible solution. Therefore, we do not report its solution time. Table \ref{tab:solomon1} shows the stability of the compared methods under various problem sizes on the C1-type instances. We can observe that our methods outperform the OR-Tools and Gurobi, and are more stable than the OR-Tools. From Table \ref{tab:solomon2}, we can find that the ALM-C algorithm outperforms the OR-Tools and Gurobi in most instances, achieving significantly lower optimality gaps and competitive computation times. The ALM-P algorithm also exhibits promising results, often outperforming Gurobi in efficiency and solution quality.
Overall, both the proposed ALM-C and ALM-P algorithms demonstrate their superiority and robustness in solving the CVRPTW problem, providing high-quality solutions with good computational efficiency when compared to existing solvers and heuristic approaches.

\begin{table*}[h]
    \small
    \caption{Performance comparison on Solomon's C1 instances with varying problem size and perturbation. A time limit of 500 seconds is set for OR-Tools.}
    \label{tab:solomon1}
        \centering
    \begin{tabular}{lll|r|ll|lll|lll|lll}
        \toprule
        \multirow{2}{*}{ins.}    & \multirow{2}{*}{$|V|$} & \multirow{2}{*}{$|F|$} & \multirow{2}{*}{$f^*$} & \multicolumn{2}{c}{OR-Tools} & \multicolumn{3}{c}{Gurobi} & \multicolumn{3}{c}{ALM-P} & \multicolumn{3}{c}{ALM-C}                                                                          \\
                                 &                        &                        &                        & $f$                          & gap$^1$              & $f$                        & gap$^1$              & Time    & $f$   & gap$^1$ & Time   & $f$   & gap$^1$ & Time   \\
        \midrule
        \multirow[t]{3}{*}{c101} & 25                     & 3                      & 191.3                  & 632.6                        & 230.7\%                    & 191.8                      & 0.3\%                      & 0.0    & 191.8 & 0.3\%         & 2.4   & 191.8 & 0.3\%        & 2.7   \\
                                 & 50                     & 5                      & 362.4                  & 514.4                        & 41.9\%                     & 363.2                      & 0.2\%                      & 1.1    & 363.2 & 0.2\%         & 5.9   & 363.2 & 0.2\%        & 5.0   \\
                                 & 100                    & 10                     & 827.3                  & 828.9                        & 0.2\%                      & 828.9                      & 0.2\%                      & 8.9    & 828.9 & 0.2\%         & 57.8  & 828.9 & 0.2\%        & 55.0  \\
        \midrule
        \multirow[t]{3}{*}{c102} & 25                     & 3                      & 190.3                  & 786.1                        & 313.1\%                    & 190.7                      & 0.2\%                      & 1.9    & 190.7 & 0.2\%         & 3.7   & 190.7 & 0.2\%        & 4.6   \\
                                 & 50                     & 5                      & 361.4                  & 667.9                        & 84.8\%                     & 362.2                      & 0.2\%                      & 25.5   & 362.2 & 0.2\%         & 16.0  & 362.2 & 0.2\%        & 15.3  \\
                                 & 100                    & 10                     & 827.3                  & 828.9                        & 0.2\%                      & -                          & \-                         & 2000 & 828.9 & 0.2\%         & 167.4 & 828.9 & 0.2\%        & 196.4 \\
        \midrule
        \multirow[t]{3}{*}{c103} & 25                     & 3                      & 190.3                  & 786.1                        & 313.1\%                    & 190.7                      & 0.2\%                      & 7.0    & 190.7 & 0.2\%         & 15.9  & 190.7 & 0.2\%        & 11.0  \\
                                 & 50                     & 5                      & 361.4                  & 667.9                        & 84.8\%                     & 362.2                      & 0.2\%                      & 1584.8 & 365.3 & 1.1\%         & 40.0  & 362.2 & 0.2\%        & 41.5  \\
                                 & 100                    & 10                     & 826.3                  & -                            & -                          & -                          & \-                         & 2000& 839.0 & 1.5\%         & 362.6 & 828.1 & 0.2\%        & 259.9 \\
        \midrule                 
        \multirow[t]{3}{*}{c104} & 25                     & 3                      & 186.9                  & 875.6                        & 368.5\%                    & 187.4                      & 0.3\%                      & 45.1   & 188.6 & 0.9\%         & 33.5  & 188.6 & 0.9\%        & 17.9  \\
                                 & 50                     & 5                      & 358.0                  & 606.2                        & 69.3\%                     & 360.1                      & 0.6\%                      & 2000 & 362.2 & 1.2\%         & 57.1  & 358.9 & 0.3\%        & 319.8 \\
                                 & 100                    & 10                     & 822.9                  & 1202.3                       & 46.1\%                     & -                          & \-                         & 2000 & 890.9 & 8.3\%         & 667.5 & 904.9 & 10.0\%        & 363.3 \\
        \midrule
        \multirow[t]{3}{*}{c105} & 25                     & 3                      & 191.3                  & 609.6                        & 218.6\%                    & 191.8                      & 0.3\%                      & 0.1    & 191.8 & 0.3\%         & 3.3   & 191.8 & 0.3\%        & 3.4   \\
                                 & 50                     & 5                      & 362.4                  & 482.4                        & 33.1\%                     & 363.2                      & 0.2\%                      & 0.3    & 363.2 & 0.2\%         & 6.4   & 363.2 & 0.2\%        & 6.0   \\
                                 & 100                    & 10                     & 827.3                  & 828.9                        & 0.2\%                      & 828.9                      & 0.2\%                      & 15.8   & 828.9 & 0.2\%         & 53.3  & 828.9 & 0.2\%        & 34.2  \\
        \midrule
        \multirow[t]{3}{*}{c106} & 25                     & 3                      & 191.3                  & 639.6                        & 234.3\%                    & 191.8                      & 0.3\%                      & 0.1    & 191.8 & 0.3\%         & 3.6   & 191.8 & 0.3\%        & 3.8   \\
                                 & 50                     & 5                      & 362.4                  & 430.4                        & 18.8\%                     & 363.2                      & 0.2\%                      & 1.4    & 363.2 & 0.2\%         & 5.8   & 363.2 & 0.2\%        & 5.9   \\
                                 & 100                    & 10                     & 827.3                  & 828.9                        & 0.2\%                      & 828.9                      & 0.2\%                      & 650.6  & 828.9 & 0.2\%         & 69.6  & 828.9 & 0.2\%        & 47.0  \\
        \midrule
        \multirow[t]{3}{*}{c107} & 25                     & 3                      & 191.3                  & 565.6                        & 195.6\%                    & 191.8                      & 0.3\%                      & 0.1    & 191.8 & 0.3\%         & 3.2   & 191.8 & 0.3\%        & 4.3   \\
                                 & 50                     & 5                      & 362.4                  & 457.4                        & 26.2\%                     & 363.2                      & 0.2\%                      & 0.8    & 363.2 & 0.2\%         & 6.1   & 363.2 & 0.2\%        & 6.6   \\
                                 & 100                    & 10                     & 827.3                  & 828.9                        & 0.2\%                      & 828.9                      & 0.2\%                      & 14.6   & 828.9 & 0.2\%         & 65.2  & 828.9 & 0.2\%        & 35.2  \\
        \midrule
        \multirow[t]{3}{*}{c108} & 25                     & 3                      & 191.3                  & 564.6                        & 195.1\%                    & 191.8                      & 0.3\%                      & 0.9    & 191.8 & 0.3\%         & 4.5   & 191.8 & 0.3\%        & 4.4   \\
                                 & 50                     & 5                      & 362.4                  & -                            & -                          & 363.2                      & 0.2\%                      & 18.6   & 363.2 & 0.2\%         & 8.4   & 363.2 & 0.2\%        & 14.2  \\
                                 & 100                    & 10                     & 827.3                  & -                            & -                          & -                          & \-                         & 2000 & 828.9 & 0.2\%         & 98.0  & 828.9 & 0.2\%        & 265.4 \\
        \midrule
        \multirow[t]{3}{*}{c109} & 25                     & 3                      & 191.3                  & 475.6                        & 148.6\%                    & 191.8                      & 0.3\%                      & 9.6    & 191.8 & 0.3\%         & 6.3   & 191.8 & 0.3\%        & 9.7   \\
                                 & 50                     & 5                      & 362.4                  & 363.2                        & 0.2\%                      & 363.2                      & 0.2\%                      & 1867.4 & 363.2 & 0.2\%         & 12.3  & 363.2 & 0.2\%        & 13.1  \\
                                 & 100                    & 10                     & 827.3                  & 828.9                        & 0.2\%                      & -                          & \-                         & 2000 & 828.9 & 0.2\%         & 234.5 & 828.9 & 0.2\%        & 304.3 \\
        \bottomrule
    \end{tabular}
\end{table*}
\begin{table*}[h]
    \small
    \caption{Complete results on other Solomon's instances, all instances are associated with 50 customers.  A time limit of 500 seconds is set for OR-Tools.}
    \label{tab:solomon2}
        \centering
    \begin{tabular}{ll|r|ll|lll|lll|lll}
        \toprule
        \multirow{2}{*}{ins.} & \multirow{2}{*}{$|J|$} & \multirow{2}{*}{$f^*$} & \multicolumn{2}{c}{OR-Tools} & \multicolumn{3}{c}{Gurobi} & \multicolumn{3}{c}{ALM-P} & \multicolumn{3}{c}{ALM-C}                                                                            \\
                              &                        &                        & $f$                          & $\varepsilon$              & $f$                        & $\varepsilon$              & $t$    & $f$    & $\varepsilon$ & $t$   & $f$    & $\varepsilon$ & $t$   \\
        \midrule
 c201.50 & 3 & 360.2 & 1622 & 350.3\% & 361.8 & 0.4\% & 0.1 & 361.8 & 0.4\% & 4.2 & 361.8 & 0.4\% & 3.4 \\ 
        c202.50 & 3 & 360.2 & 1622 & 350.3\% & 361.8 & 0.4\% & 5.7 & 361.8 & 0.4\% & 19.1 & 366.8 & 1.8\% & 7.1 \\ 
        c203.50 & 3 & 359.8 & 1713.4 & 376.2\% & 361.4 & 0.4\% & 113.5 & 361.4 & 0.4\% & 43.7 & 361.4 & 0.4\% & 76.8 \\ 
        c204.50 & 2 & 350.1 & 1435.3 & 310.0\% & 351.7 & 0.5\% & 2000 & 366.9 & 4.8\% & 55.7 & 351.7 & 0.5\% & 166.3 \\ 
        c205.50 & 3 & 359.8 & 1729 & 380.5\% & 361.4 & 0.4\% & 1.1 & 361.4 & 0.4\% & 7.9 & 361.4 & 0.4\% & 7.0 \\ 
        c206.50 & 3 & 359.8 & 1741.8 & 384.1\% & 361.4 & 0.4\% & 1.5 & 361.4 & 0.4\% & 14.4 & 361.4 & 0.4\% & 19.1 \\ 
        c207.50 & 3 & 359.6 & 1622.3 & 351.1\% & 361.2 & 0.4\% & 11.9 & 361.2 & 0.5\% & 25.0 & 361.2 & 0.4\% & 125.7 \\ 
        c208.50 & 2 & 350.5 & 1629.2 & 364.8\% & 352.1 & 0.5\% & 8.0 & 355.9 & 1.5\% & 14.2 & 352.1 & 0.5\% & 31.5 \\     \midrule 
        r101.50 & 12 & 1044 & 1541.9 & 47.7\% & 1046.7 & 0.3\% & 2.1 & 1046.7 & 0.3\% & 38.5 & 1046.7 & 0.3\% & 47.3 \\ 
        r102.50 & 11 & 909 & 1374.8 & 51.2\% & 911.4 & 0.3\% & 2000 & 939.5 & 3.4\% & 248.9 & 925.3 & 1.8\% & 317.2 \\ 
        r103.50 & 9 & 772.9 & 1069.1 & 38.3\% & - & - & 2000 & 806.4 & 4.3\% & 289.8 & 803.8 & 4.0\% & 256.1 \\ 
        r104.50 & 6 & 625.4 & 743.5 & 18.9\% & - & - & 2000 & 654.4 & 4.6\% & 472.1 & 686.6 & 9.8\% & 649.9 \\ 
        r105.50 & 9 & 899.3 & 1101.7 & 22.5\% & 901.9 & 0.3\% & 268.5 & 906.13 & 0.8\% & 61.1 & 901.9 & 0.3\% & 60.2 \\ 
        r106.50 & 5 & 793 & 937.8 & 18.3\% & - & - & 2000 & - & - & - & - & - & - \\ 
        r107.50 & 7 & 711.1 & 790.5 & 11.2\% & - & - & 2000 & 816.47 & 14.8\% & 120.5 & 741.5 & 4.3\% & 336.2 \\ 
        r108.50 & 6 & 617.7 & 698.1 & 13.0\% & - & - & 2000 & 645.7 & 4.5\% & 215.1 & 643.9 & 4.2\% & 466.5 \\ 
        r109.50 & 8 & 786.8 & 873.2 & 11.0\% & 805.6 & 2.4\% & 2000 & 796.3 & 1.2\% & 77.2 & 788.7 & 0.2\% & 324.9 \\ 
        r110.50 & 7 & 697 & 887.1 & 27.3\% & - & - & 2000 & 754.9 & 8.3\% & 560.6 & 768.1 & 10.2\% & 225.4 \\ 
        r111.50 & 7 & 707.2 & 784.5 & 10.9\% & - & - & 2000 & 750.0 & 6.0\% & 297.8 & 801.9 & 13.4\% & 220.8 \\ 
        r112.50 & 6 & 630.2 & 730.6 & 15.9\% & - & - & 2000 & 665.7 & 5.6\% & 476.5 & 698.0 & 10.8\% & 300.2 \\ \midrule  
        r201.50 & 6 & 791.9 & 1196.9 & 51.1\% & 794.3 & 0.3\% & 5.2 & 803.5 & 1.5\% & 27.0 & 801.3 & 1.2\% & 30.4 \\ 
        r202.50 & 5 & 698.5 & 1173.2 & 68.0\% & 723.0 & 3.5\% & 2000 & 735.7 & 5.3\% & 437.0 & 726.3 & 4.0\% & 337.2 \\ 
        r203.50 & 5 & 605.3 & 1173.2 & 93.8\% & 608.0 & 0.4\% & 2000& 639.4 & 5.6\% & 168.6 & 653.0 & 7.9\% & 380.9 \\ 
        r204.50 & 2 & 506.4 & 1127.9 & 122.7\% & 512.4 & 1.2\% & 2000 & 524.0 & 3.5\% & 75.8 & 583.8 & 15.3\% & 401.6 \\ 
        r205.50 & 4 & 690.1 & 1132.3 & 64.1\% & 700.2 & 1.5\% & 2000 & 732.4 & 6.1\% & 74.5 & 731.7 & 6.0\% & 87.8 \\ 
        r206.50 & 4 & 632.4 & 1066.9 & 68.7\% & 657.2 & 3.9\% & 2000 & 682.1 & 7.9\% & 188.4 & 677.2 & 7.1\% & 266.6 \\ 
        r207.50 & 3 & 361.6 & 1046.8 & 189.5\% & - & - & 2000 & 362.6 & 0.3\% & 13.1 & 364.1 & 0.7\% & 25.2 \\ 
        r208.50 & 1 & 328.2 & 1019.6 & 210.7\% & - & - & 2000 & 329.3 & 0.3\% & 11.9 & 329.3 & 0.3\% & 13.3 \\ 
        r209.50 & 4 & 600.6 & 1043.6 & 73.8\% & 639.6 & 6.5\% & 2000 & 608.5 & 1.3\% & 217.9 & 609.4 & 1.5\% & 58.5 \\ 
        r210.50 & 4 & 645.6 & 1119.5 & 73.4\% & 661.0 & 2.4\% & 2000 & 710.2 & 10.0\% & 288.6 & 669.2 & 3.7\% & 253.4 \\ 
        r211.50 & 3 & 535.5 & 958.7 & 79.0\% & - & - & 2000 & 590.4 & 10.3\% & 168.8 & 555.1 & 3.7\% & 339.8 \\ \midrule 
        rc101.50 & 8 & 944 & 1108.3 & 17.4\% & 945.6 & 0.2\% & 2000 & 945.6 & 0.2\% & 191.2 & 945.6 & 0.2\% & 78.8 \\ 
        rc102.50 & 7 & 822.5 & 940.7 & 14.4\% & - & - & 2000 & 823.1 & 0.1\% & 510.3 & 823.1 & 0.1\% & 242.9 \\ 
        rc103.50 & 6 & 710.9 & 834.9 & 17.4\% & - & - & 2000 & 736.4 & 3.6\% & 134.7 & 751.3 & 5.7\% & 335.2 \\ 
        rc104.50 & 5 & 545.8 & 641.4 & 17.5\% & - & - & 2000 & 546.5 & 0.1\% & 101.6 & 546.5 & 0.1\% & 91.5 \\
        rc105.50 & 8 & 855.3 & 1112.7 & 30.1\% & - & - & 2000 & 873.2 & 2.1\% & 99.0 & 902.7 & 5.5\% & 134.4 \\ 
        rc106.50 & 6 & 723.2 & 793 & 9.7\% & - & - & 2000 & 733.7 & 1.5\% & 79.3 & 728.1 & 0.7\% & 152.4 \\ 
        rc107.50 & 6 & 642.7 & 752.3 & 17.1\% & - & - & 2000 & 650.3 & 1.2\% & 235.2 & 644.0 & 0.2\% & 183.7 \\ 
        rc108.50 & 6 & 598.1 & 690.3 & 15.4\% & - & - & 2000 & 600.7 & 0.4\% & 256.1 & 599.2 & 0.2\% & 276.4 \\ \midrule 
        rc201.50 & 5 & 684.8 & 1904.7 & 178.1\% & 686.3 & 0.2\% & 12.5 & 687.7 & 0.4\% & 31.9 & 686.3 & 0.2\% & 14.8 \\ 
        rc202.50 & 5 & 613.6 & 1172.6 & 91.1\% & 615.0 & 0.2\% & 2000 & 615.6 & 0.3\% & 62.2 & 615.0 & 0.2\% & 85.9 \\ 
        rc203.50 & 4 & 555.3 & 1163.1 & 109.5\% & 556.5 & 0.2\% & 2000 & 590.4 & 6.3\% & 256.7 & 558.5 & 0.6\% & 265.8 \\ 
        rc204.50 & 3 & 444.2 & 1090.3 & 145.5\% & 509.4 & 14.7\% & 2000 & 451.8 & 1.7\% & 218.4 & 462.3 & 4.1\% & 153.7 \\ 
        rc205.50 & 5 & 630.2 & 1222.7 & 94.0\% & 632.0 & 0.3\% & 2000 & 632.0 & 0.3\% & 89.8 & 632.0 & 0.3\% & 56.0 \\ 
        rc206.50 & 5 & 610 & 1088.5 & 78.4\% & 611.7 & 0.3\% & 2000 & 611.7 & 0.3\% & 36.8 & 611.7 & 0.3\% & 42.9 \\ 
        rc207.50 & 4 & 558.6 & 998.3 & 78.7\% & - & - & 2000 & 591.7 & 5.9\% & 315.5 & 608.5 & 8.9\% & 281.1 \\ 
        rc208.50 & 2 & 269.1 & 936.9 & 248.2\% & - & - & 2000 & 269.6 & 0.2\% & 65.0 & 269.6 & 0.2\% & 109.1 \\ 
        \bottomrule
    \end{tabular}
\end{table*}

To demonstrate the advantages of our methods, we show the convergence curve of the primal bound and dual bound of Gurobi, comparing it to the solution found by our solver with the instance
``C109.50" as an example. As we can observe in Figure \ref{fig:comp_grb_alm}, our methods achieve near-optimal solutions in around 15 seconds, significantly faster than Gurobi which takes approximately 300 seconds. We notice that as the objective values increase, the corresponding constraint violation decreases simultaneously, facilitating fast convergence.

\begin{figure*}[ht]
  \centering
    \subfloat[Objective Value]{%
        \includegraphics[width=0.32\textwidth]{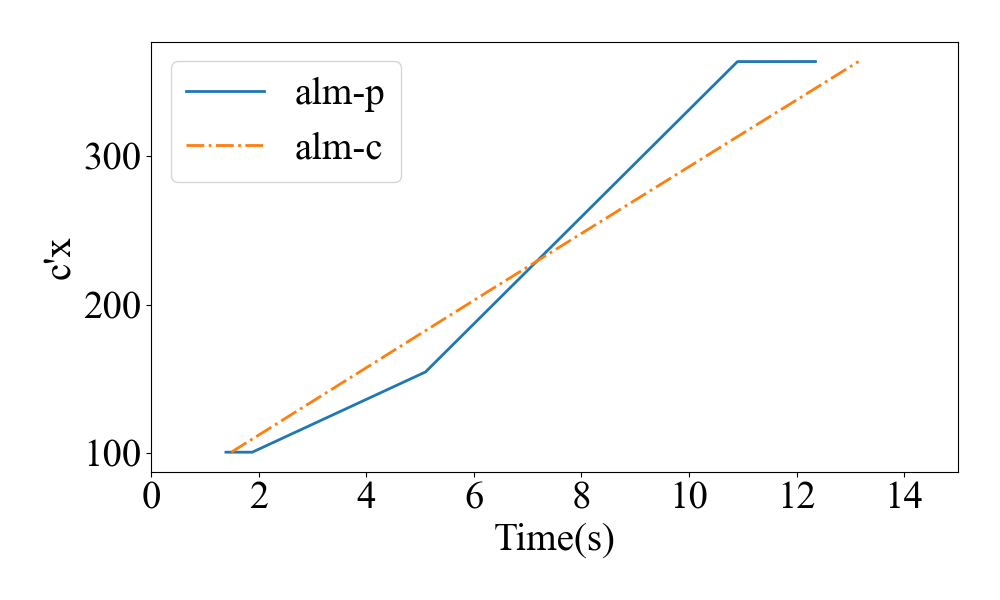}
        \label{fig:sub1}
    }
    \subfloat[Constraint Violation]{%
        \includegraphics[width=0.32\textwidth]{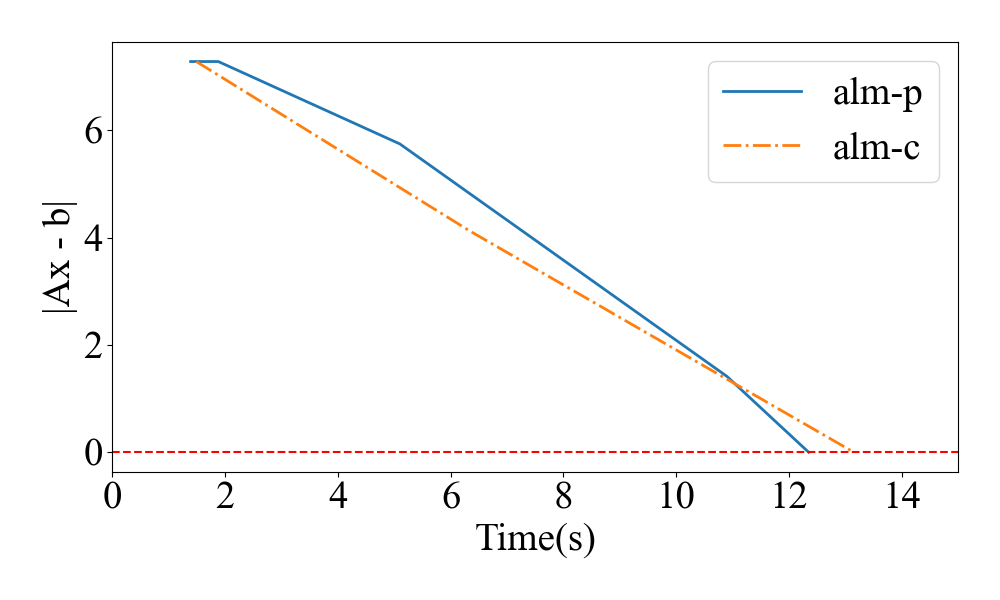}
        \label{fig:sub2}
    }
    \subfloat[Primal and Dual Bound of Gurobi]{%
        \includegraphics[width=0.32\textwidth]{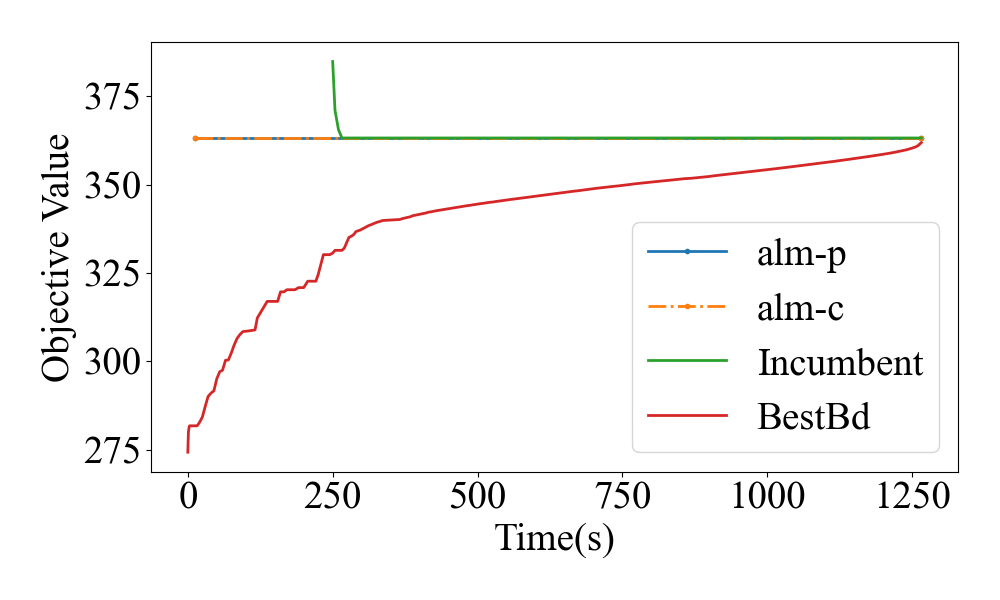}
        \label{fig:sub3}
    }
  \caption{Comparison of Gurobi and our methods}
    \label{fig:comp_grb_alm}
\end{figure*}

\subsection{Train timetabling problem}
We consider following space-time network model for the train timetabling problem (TTP) on a macro level, which is based on the model in \cite{caprara2002modeling}. We use a directed, acyclic and multiplicative graph \(G = (V, E)\) to characterize the train timetabling problem, where
$V$ and $E$  denote the set of all nodes and the set of all arcs. For each train \(j\in \N_p\), the sets or parameters with superscript or subscript notation corresponds to relevant object to $j$.  For each arc $e \in E_j$, we introduce a binary variable $x_e$ equal to 1 if the arc $e$ is selected. For each node $v \in V$, let $\delta^+_j(v)$  and  $\delta^-_j(v)$ be the sets of arcs in $E_j$ leaving and entering node $v$, respectively.
Then the integer programming model of TTP is given by
\begin{subequations}\label{STMN}
  \begin{align}
    \label{eq.g.obj}    \max           & \sum\nolimits_{j \in \N_p} \sum\nolimits_{e \in E^j} p_e x_e                                                                                \\
    \label{eq.g.start}     \text{s.t.} & \sum\nolimits_{e \in \delta_{j}^{+}(\sigma)} x_e \le 1, \hspace{3.1cm} j \in \N_p                                                          \\
    \label{eq.g.flow}                  &  \sum_{e \in \delta_{j}^{-}(v)} x_e=\sum_{e \in \delta_{j}^{+}(v)} x_e, ~ j \in \N_p, \hspace{2.2mm}  v \in V \backslash\{\sigma, \tau\} \\
    \label{eq.g.end}                   &  \sum\nolimits_{e \in \delta_{j}^{-}(\tau)} x_e \le 1, \hspace{3.2cm} j \in \N_p                                                             \\
    \label{eq.g.interval}              &  \sum\nolimits_{v' \in \mathcal N(v)} \sum\nolimits_{j \in \mathcal{T}(v')} \sum\nolimits_{e \in \delta_{j}^{-}(v')} x_e \le 1,  v \in V         \\
    \label{eq.g.clique}                &  \sum\nolimits_{e \in C} x_e \le 1, \hspace{3.7cm} C \in \mathcal{C}                                                                         \\
    \label{eq.g.binary}                &  x_e \in \{0, 1\}, \hspace{4.25cm} e \in E,
  \end{align}
\end{subequations}
where $p_e$ is the ``profit'' of using a certain arc $e$. $\sigma$ and $\tau$ denote artificial origin and destination nodes, respectively.   $\mathcal{T}(v)$ and $\mathcal{N}(v)$ denote the set of trains may passing through node $v$ and the set of nodes conflicted with node $v$, respectively. $\mathcal{C}$ denotes the (exponentially large) family of maximal subsets $C$ of pairwise incompatible arcs. In this model, \eqref{eq.g.start}, \eqref{eq.g.flow}, \eqref{eq.g.end} imply the arcs of train $j$ should form a valid path in \(G\), \eqref{eq.g.interval} represents headway constraints, \eqref{eq.g.clique} forbids the simultaneous selection of incompatible arcs, imposing the track capacity constraints.

Let $\x_j = \{x_e \mid e \in  E_j\}$. Then we can rewrite this space-time network model in the general form as (\ref{bLP}).  
Our goal is to show that the proposed Algorithm \ref{PBALM} is fully capable to provide implementable time tables for the Jinghu railway. Specifically, our algorithms are tested on the time-tabling problem for Beijing-Shanghai high-speed railway (or Jinghu high-speed railway in Mandarin). As one of the busiest railways in the world, the Beijing-Shanghai high-speed railway transported over 210 million passengers in 2019\footnote{For details, see https://en.wikipedia.org/wiki/Beijing-Shanghai\_high-speed\_railway.}. In our test case, the problem consists of 29 stations and 292 trains in both directions (up and down), including two major levels of speed: 300 km/h and 350 km/h. Several numerical experiments are carried out based on the data of Beijing-Shanghai high-speed railway to demonstrate the feasibility and effectiveness of the proposed strategy.

We compare our proposed methods with the Gurobi solver and ADMM \cite{ZHANG2020102823} on small and real-world instances, as presented in Tables \ref{small_network} and \ref{mpnsize}. The notations $|F|, |S|$ and $|T|$ represent the number of trains, the number of stations, and the time window.  In most large-scale cases, the Gurobi solver takes a much longer time to solve, so we set a time limit of two hours.   Since \eqref{eq.g.obj} maximizes the positive revenue, we have a negative cost if reversing the objective to a minimization problem, then both subproblems \eqref{xjko} and \eqref{xjkl} are solved by the Bellman-Ford algorithm, which is an efficient tool for solving the shortest path problem.

\subsubsection{Performances on small instances}
We first validate our algorithm on a smaller sub-network using a subset of stations of the Jinghu railway.  We set the revenue of each arc proportional to the distance between two stations, which corresponds to an intuition that longer rides should bear higher incomes.
For our proposed methods, we set uniformly $\rho^0=20$ and $\sigma=1.2$ for all instances. We set a time limit of 100 seconds for our algorithms. Since the optimal value is unknown, we report the upper bounds (UB) obtained by the Gurobi solver as a reference for comparison in Table \ref{small_network}. We can observe that our ALM-C performs competitively with the Gurobi solver in terms of both the optimal value and computation time.

\begin{table*}[h]
  \caption{Performance on four small examples on sub-networks of the Jinghu railway for maximizing the total revenue}
  \centering
  \label{small_network}
  \begin{tabular}{cccccccccccc}
    \toprule
   \multirow{2}{*}{ $|F|$ } & \multirow{2}{*}{$|S|$} & \multirow{2}{*}{$|T|$} & \multicolumn{3}{c}{Gurobi} & \multicolumn{2}{c}{ADMM} & \multicolumn{2}{c}{ALM-P} & \multicolumn{2}{c}{ALM-C}                                                                      \\
    \cmidrule(lr){4-6} \cmidrule(lr){7-8} \cmidrule(lr){9-10} \cmidrule(lr){11-12}
                           &                        &                        &UB                         & $f$                       & Time                       & $f$                         & Time  & $f$              & Time  & $f^*$              & Time        \\
    \midrule
    20                       & 15                     & 200                    & 20952            & \textbf{20952}          & \textbf{5.0}              & \textbf{20952}            & 14.2 & \textbf{20952} & 13.1 & \textbf{20952} & 10.0          \\
    25                       & 15                     & 200                    & 23396           & \textbf{23396}                   & 11.5                      & 23396                     & 31.3 & 21085          & 22.1 & \textbf{23396} & \textbf{11.4} \\
    30                       & 15                     & 300                               & 25707    & \textbf{25707}                & 14.4                      & 24552                     & 14.1 & \textbf{25707} & 23.0 & \textbf{25707} & \textbf{9.2}  \\
    40                       & 15                     & 300                               & 34305      & \textbf{34305}             & 24.3                      & 31817                     & 100.0   & 32061          & 100.0   & \textbf{34305} & \textbf{17.0} \\
    \bottomrule
  \end{tabular}
\end{table*}

\subsubsection{Performances on real-world instances}

To verify the efficiency of the ALM-C and ALM-P on large-scale data with all stations involved, we consider the following five examples of ascending problem size in Table \ref{mpnsize}.  Similarly, we try to maximize the total revenue of our schedule. In practice, the revenue of an arc may be ambiguous. Besides, since Jinghu high-speed railway always has a high level of utilization, introducing new trains is always beneficial since it further covers unmet demand. In this view, we introduce an alternative objective function to simply maximize the number of scheduled trains.  Perhaps not surprisingly, this simplified objective can dramatically speed up our methods for practical interest. This is due to the fact that the coefficient $p_e$ in the objective function is sparse, resulting in faster identification of the optimal solution under this model.

 For our proposed methods, we set $\rho^0=10^{-3}$ and $\sigma=2$ for the instances No. 1-2, $\rho^0=10^{-2}$ and $\sigma=1.1$ for the instances No. 3 and No. 5, $\rho^0=10^{-3}$ and $\sigma=1.1$ for the instance No. 4. We set $\x^0$ and $\lambda^0$ to be zero for all instances.
Both the number of variables and the number of constraints of the model \eqref{STMN} are very large for this practical TTP instance, whose dimensions are up to tens of millions.
Tables \ref{tab:performance_BCD_LU_rev} and \ref{tab:performance_BCD_LU} show the performance results of our proposed methods and the Gurobi solver, where gap$^2:= (\UB-f)/f$.

\begin{table}[h]
  \caption{Five examples of the large practical railway network}
  \label{mpnsize}
  \centering
  \begin{tabular}{ccccccc}
    \toprule
    \multirow{1}{*}{No.} & \multirow{1}{*}{$|F|$} & \multirow{1}{*}{$|S|$} & \multirow{1}{*}{$|T|$} & \multirow{1}{*}{$m$} & \multirow{1}{*}{$q$} & \multirow{1}{*}{$n$}\cr
    \midrule
    1                    & 50                     & 29                     & 300                    & 49,837               & 231,722              & 308,177 \cr
    2                    & 50                     & 29                     & 600                    & 113,737              & 1,418,182            & 1,396,572 \cr
    3                    & 100                    & 29                     & 720                    & 145,135              & 1,788,088            & 4,393,230 \cr
    4                    & 150                    & 29                     & 960                    & 199,211              & 3,655,151            & 9,753,590 \cr
    5                    & 292                    & 29                     & 1080                   & 228,584              & 13,625,558           & 26,891,567 \cr
    \bottomrule
  \end{tabular}
\end{table}

\renewcommand{\arraystretch}{1} 
\begin{table*}[h]
  \caption{Performance comparison between the Gurobi solver, ADMM, ALM-P and ALM-C on the large networks for maximizing total revenue in Table \ref{mpnsize}.
  }
  \label{tab:performance_BCD_LU_rev}
    \centering
  \begin{threeparttable}
    \centering
    \begin{tabular}{ccccccccccccccc}
      \toprule
      \multirow{2}{*}{No.} & \multicolumn{3}{c}{ Gurobi} & \multicolumn{3}{c}{ADMM} & \multicolumn{3}{c}{ ALM-P} & \multicolumn{3}{c}{ ALM-C}\cr
      \cmidrule(lr){2-4} \cmidrule(lr){5-7} \cmidrule(lr){8-10} \cmidrule(lr){11-13}
                          & $f$                       & gap$^2$                         & Time                           & $f$        & gap$^2$     & Time         & $f$        & gap$^2$     & Time & $f$                 & gap$^2$              & Time \cr
      \midrule
      1                   & 6.18e+04                & 13.6\%                     & 7200.0                             & 6.94e+04 & 3.0\%  & \textbf{54.1} & 6.93e+04 & 3.1\%  & 84.3  & \textbf{6.98e+04} & \textbf{2.5\% } & 72.2\cr
      2                   & 7.61e+04                & 45.3\%                     & 7200.0                             & 1.34e+05 & 3.5\%  & \textbf{96.2} & 1.34e+05 & 3.4\%  & 144.4  & \textbf{1.35e+05} & \textbf{3.0\% } & 126.1    \cr
      3                   & 9.69e+04                & 61.5\%                     & 7200.0                             & 2.33e+05 & 7.3\%  & 558.3          & 2.36e+05 & 6.3\%  & 414.9  & \textbf{2.36e+05} & \textbf{6.3\% } & \textbf{396.6} \cr
      4                   & 1.23e+05                & 74.5\%                     & 7200.0                             & 4.38e+05 & 9.6\%  & 1422.7         & 4.42e+05 & 8.7\%  & 924.3 & \textbf{4.45e+05} & \textbf{8.1\% } & \textbf{714.9} \cr
      5                   & 2.95e+05                & 74.0\%                     & 7200.0                            & 9.15e+05 & 19.5\% & 3600.0         & 9.60e+05 & 15.6\% & 2094.2 & \textbf{9.63e+05} & \textbf{15.3\%} & \textbf{1722.7} \cr
      \bottomrule
    \end{tabular}
  \end{threeparttable}
\end{table*}

\renewcommand{\arraystretch}{1} 
\begin{table*}[h]
  \caption{Performance comparison between the Gurobi solver, ADMM, ALM-P and ALM-C on the large networks for maximizing the number of trains in the timetable in Table \ref{mpnsize}}
  \label{tab:performance_BCD_LU}
    \centering
  \begin{threeparttable}
    \centering
    \begin{tabular}{ccccccccccccccc}
      \toprule
      \multirow{2}{*}{No.} & \multirow{2}{*}{$f^*$} & \multicolumn{3}{c}{ Gurobi} & \multicolumn{3}{c}{ADMM} & \multicolumn{3}{c}{ ALM-P} & \multicolumn{3}{c}{ ALM-C}\cr
      \cmidrule(lr){3-5} \cmidrule(lr){6-8} \cmidrule(lr){9-11} \cmidrule(lr){12-14}
                           &                     & $f$                          & gap$^1$                      & Time                        & $f$                             & gap$^1$  & Time    & $f$            & gap$^1$ & Time          & $f$            & gap$^1$ & Time \cr
      \midrule
      1                    & --                  & \textbf{27}                & --                       & 3836.0                     & \textbf{27}                   & --   & 3.4    & \textbf{27}  & --  & \textbf{3.4} & \textbf{27}  & --  & 3.5 \cr
      2                    & 50                 & \textbf{50}                & \textbf{0}                        & 2965.8                     & \textbf{50}                   & \textbf{0}    & 8.2    & \textbf{50}  & \textbf{0}   & \textbf{8.1} & \textbf{50}  & \textbf{0}   & 8.2   \cr
      3                    & 100                & 11                         & 89.0\%                    & 7200.0                     & 89                            & 11.0\%   & 3001.4 & \textbf{100} & \textbf{0}   & 127.4        & \textbf{100} & \textbf{0}   & \textbf{96.4} \cr
      4                    & 150                & 14                         & 90.7\%                    & 7200.0                     & 148                           & 1.3\% & 3005.3 & \textbf{150} & \textbf{0}   & 99.5         & \textbf{150} & \textbf{0}   & \textbf{96.4} \cr
      5                    & 292                & 30                         & 89.7\%                    & 7200.0                     & 286                           & 2.1\% & 3003.8 & \textbf{292} & \textbf{0}   & 203.1        & \textbf{292} & \textbf{0}   & \textbf{172.1} \cr
      \bottomrule
    \end{tabular}
  \end{threeparttable}
\end{table*}

The results clearly demonstrate the high effectiveness of our methods compared to Gurobi, especially when dealing with large-scale data.  For the instance No. 1, both ALM-C and ALM-P can schedule 27 trains in a few seconds, which are at least 1000 times faster than the Gurobi solver and meanwhile obtain satisfactory accuracy performance. In particular, when the scale of data is up to tens of millions in instance No. 5, our ALM-C and ALM-P successfully schedule all 292 trains. In contrast, the Gurobi solver produces a relatively small number of trains, only 30 trains, and takes much longer. The results of other instances also illustrate the effectiveness of this technique. Therefore, we can conclude that the customized ALM provides a fast and global optimal solution for this practical problem.

\section{Conclusion}

In this paper, we study general integer programming with block structure. 
Benefiting from its special structure,  we extend the augmented Lagrangian method, originally designed for continuous problems, to effectively solve the problem \eqref{bLP}. By introducing a novel augmented Lagrangian function, we establish the strong duality and optimality for the problem \eqref{bLP}. Furthermore,  we provide the convergence results of the proposed methods for both the augmented Lagrangian relaxation and dual problems. To obtain high-quality feasible solutions, we develop a customized ALM combined with refinement techniques to iteratively improve the primal and dual solution quality simultaneously. The numerical experiments demonstrate that the customized ALM is time-saving and performs well for finding optimal solutions to a wide variety of practical problems.


%



\ifCLASSOPTIONcaptionsoff
  \newpage
\fi



%
\bibliographystyle{IEEEtran}
\bibliography{ref}

\end{document}